\numberwithin{equation}{section}
\newtheorem{theorem}{Theorem}[section]
\newtheorem{corollary}[theorem]{Corollary}
\newtheorem{lemma}[theorem]{Lemma}
\newcommand{\Sym}{\mathop{\mathrm{Sym}}}
\newcommand{\aut}{\mathop{\mathrm{Aut}}}
\newcommand{\soc}{\mathop{\mathrm{soc}}}
\newcommand{\perm}{\mathop{\mathrm{Sym}}}
\def\nor#1#2{{\bf N}_{{#1}}({{#2}})}
\def\cent#1#2{{\bf C}_{{#1}}({{#2}})}
\begin{document}

\title[$\mathcal{B}_{pp}$-groups]{Independent sets of generators of prime power order}
\author[A. Lucchini]{Andrea Lucchini}
\address{Andrea Lucchini, Dipartimento di Matematica \lq\lq Tullio Levi-Civita\rq\rq,\newline
 University of Padova, Via Trieste 63, 35121 Padova, Italy} 
\email{lucchini@math.unipd.it}
         
\author[P. Spiga]{Pablo Spiga}
\address{Pablo Spiga, Dipartimento di Matematica Pura e Applicata,\newline
 University of Milano-Bicocca, Via Cozzi 55, 20126 Milano, Italy} 
\email{pablo.spiga@unimib.it}
\subjclass[2010]{primary 20D10, secondary 20D60, 20F05}
\keywords{independent sets, generating set, Burnside basis theorem}        
	\maketitle

        \begin{abstract}
A subset $X$ of
a finite group $G$ is said to be prime-power-independent if each element in $X$ has prime power order and there is no proper subset $Y$ of $X$ with $\langle Y, \Phi(G)\rangle = \langle X, \Phi(G)\rangle$, where $\Phi(G)$ is the Frattini subgroup of $G$. A group $G$ is $\mathcal{B}_{pp}$ if all prime-power-independent
generating sets for $G$ have the same cardinality. We prove that, if $G$ is $\mathcal{B}_{pp}$, then $G$ is solvable. Pivoting on some recent results of Krempa and Stocka~\cite{ks,s}, this yields a complete classification of $\mathcal{B}_{pp}$-groups.         
          \end{abstract}
\section{Introduction}\label{sec:introduction}

Throughout this paper, all groups are finite. We start this introductory section with some definitions fundamental for our work.
Given a group $G$, an element $g\in G$ is said to be a $pp$-\textit{\textbf{element}} if $g$ has prime power order. A subset $X$ of $G$ is said to be
\begin{description}
\item[\textit{\textbf{independent}}] if $\langle X,\Phi(G)\rangle\ne \langle Y,\Phi(G)\rangle$ for every proper subset $Y$ of $X$ (where as customary we denote by $\Phi(G)$ the \textit{\textbf{Frattini subgroup}} of $G$);
\item[$pp$-\textit{\textbf{independent}}] if $X$ is independent and each element in $X$ is a $pp$-element; and
\item[$pp$-\textit{\textbf{base}}]if $X$ is a $pp$-independent generating set for $G$.
\end{description}
 Finally, $G$ is said to be a $\mathcal{B}_{pp}$-\textit{\textbf{group}} if every two $pp$-bases of $G$ have the same cardinality.

The main result of this paper is the following.
\begin{theorem}
\label{thrm:main}
If $G$ is a $\mathcal{B}_{pp}$-group, then $G$ is solvable. 
\end{theorem}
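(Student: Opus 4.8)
The plan is to proceed by contradiction, choosing a counterexample $G$ of least order: a non-solvable $\mathcal{B}_{pp}$-group. I would first collect the routine reductions. Since the elements of prime-power order of a finite group generate it (their subgroup contains a full Sylow $p$-subgroup for every prime $p$ dividing $|G|$), $pp$-bases exist, and being $\mathcal{B}_{pp}$ is equivalent to the equality of the minimum and the maximum cardinality of a $pp$-base. The first substantial step is to show that the class of $\mathcal{B}_{pp}$-groups is closed under quotients. Here the key points are that a $pp$-element of $G/N$ lifts to a $pp$-element of $G$ (pass to the preimage of the cyclic $p$-group it generates and take a Sylow $p$-subgroup), that $\Phi(G)N/N\le \Phi(G/N)$ forces a lift of an independent set to be independent, and that the number of $pp$-elements one must adjoin to complete a lifted $pp$-base of $G/N$ to a $pp$-base of $G$ is governed only by the $G$-chief structure inside $N$ and hence is the same for every base; this last point is the one requiring care.

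Granting quotient-closure, minimality gives that every proper quotient of $G$ is solvable. Two distinct minimal normal subgroups $N_1\ne N_2$ would embed $G$ into $(G/N_1)\times(G/N_2)$, both solvable, making $G$ solvable; hence $G$ has a unique minimal normal subgroup $N=\soc(G)$. Since $\Phi(G)$ is nilpotent, $G$ and $G/\Phi(G)$ have the same solvability status, so minimality lets me assume $\Phi(G)=1$. As $G/N$ is solvable while $G$ is not, $N$ is non-abelian, so $N=S^k$ with $S$ a non-abelian simple group; and $\cent{G}{N}$, being normal with $\cent{G}{N}\cap N=1$, must be trivial, whence $G\le \aut(N)=\aut(S)\wr\Sym(k)$ acts transitively on the $k$ simple factors with solvable quotient $G/N$.

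The heart of the argument is then to exhibit two $pp$-bases of $G$ of different cardinalities, contradicting $\mathcal{B}_{pp}$. The mechanism is the flexibility of prime-power generation of the non-abelian socle: a non-abelian simple group admits irredundant generating sets of prime-power elements of at least two distinct sizes (for instance $\mathrm{A}_5$ is generated irredundantly both by a $3$-element together with a $5$-element and by three suitable involutions). I would fix an efficient $pp$-base and a wasteful one by controlling the generation relative to $N$: keeping a fixed family of $pp$-elements whose images generate the solvable quotient $G/N$, I generate $N$ on top of them in two ways, using different numbers of prime-power elements, and arrange each total family to be independent in $G$. Concretely one compares a short chain $\langle H, g\rangle=G$, realizing $N$ by a single extra generator, with a longer irredundant chain passing through an intermediate subgroup, exploiting that over a suitable subgroup $H$ the relative generation of $G$ needs one $pp$-element in one configuration but cannot be achieved with fewer than two in another. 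For $k>1$ this requires generating the diagonal and permutation structure of $S^k$ with a prescribed number of $pp$-elements.

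The main obstacle is the global nature of independence together with the interaction between the top quotient $G/N$ and the socle. Adding or removing a generator inside $N$ can silently make some other chosen element redundant, so the delicate part is to certify that both constructed families are genuinely $pp$-bases (no proper subset generates modulo $\Phi(G)=1$) while differing in size; this is where the precise subgroup geometry of $G$ over $H$, and for $k>1$ the transitive action on the simple factors, must be handled. Once two $pp$-bases of different cardinalities are produced the contradiction is immediate, so such a $G$ cannot exist and every $\mathcal{B}_{pp}$-group is solvable.
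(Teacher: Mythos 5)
Your reduction matches the paper's opening moves: minimal counterexample, quotient-closure of the class $\mathcal{B}_{pp}$ (which you need not reprove---it is Theorem~6.1(1) of Krempa--Stocka, quoted as Lemma~2.5 in the paper; your own sketch of it, in particular the claim that the number of $pp$-elements needed to complete a lifted base is ``governed only by the chief structure inside $N$'', is itself unsubstantiated), uniqueness of the minimal normal subgroup $N$, non-abelianness of $N\cong S^n$, and the embedding of $G$ into $H\,\mathrm{wr}\Sym(n)$ with $H$ almost simple with socle $S$. The genuine gap is the entire second half. Your plan to ``exhibit two $pp$-bases of different cardinalities'' by comparing an efficient and a wasteful generation of $N$ over a fixed lift of a base of $G/N$ is a statement of the goal, not an argument, and you yourself name the obstruction (independence is global in $G$, so generators placed in the socle can silently make others redundant) without resolving it. The paper never constructs two explicit bases; instead it derives two incompatible inequalities. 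On one side, lifting a $pp$-base of $G/N$ to a set $Y$ of $pp$-elements, setting $K:=\pi(\nor{\langle Y\rangle}{S_1})$ (so that $H=KS$ by the modular law) and adjoining $t(H,K)$ $pp$-elements of $S$ in one coordinate gives a generating set of $pp$-elements; the $\mathcal{B}_{pp}$ hypothesis together with $m(G)=m_{pp}(G)$ then yields $m(G)\le m(G/N)+t(H)$. On the other side, earlier results of Lucchini give $m(G)-m(G/N)=\mu(G)\ge\mu(H)\ge m_K(H)$ for every supplement $K$ of $S$ in $H$.

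The contradiction then rests entirely on the paper's key technical result (Lemma~3.8): for every almost simple group $H$ there exists a supplement $K$ with $m_K(H)>t(H)$. Nothing in your proposal supplies a substitute for this, and it is where all the work lies: it uses the Classification, King's theorem that every non-abelian simple group is generated by an involution and an element of odd prime power order (giving $t(H)\le 2$), a root-subgroup construction producing $K$-independent generating sets of size at least $2|\tilde{\Pi}|$ for groups of Lie type, and, in the cases where the Dynkin diagram has too few $K$-orbits of nodes ($\mathrm{PSL}_2(q)$, $\mathrm{PSU}_3(q)$, $\mathrm{PSL}_3(q)$ with graph automorphisms present, ${}^2B_2(q)$, ${}^2G_2(q)$, $\mathrm{Sp}_4(2^e)$, $G_2(3^e)$), separate proofs that $t(H)=1$ via Feit's large Zsigmondy primes, maximal-subgroup data, and fixed-point-ratio/spread estimates. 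Your $\mathrm{Alt}(5)$ observation---that a simple group admits irredundant $pp$-generating sets of two different sizes---does not transfer to $G$: independence must be certified relative to the whole of $G$, with the transitive action on the $n$ simple factors and the solvable top quotient interfering, and quantifying that flexibility uniformly over all almost simple $H$ and all relevant supplements $K$ is exactly the content your proposal is missing.
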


Theorem~\ref{thrm:main} gives a solution to Question~1 in~\cite{ks} in a strong sense. In fact,  it yields a complete classification of the $\mathcal{B}_{pp}$-groups. Indeed, Krempa and Stocka~\cite{ks,s} have obtained an entirely satisfactory classification of solvable $\mathcal{B}_{pp}$-groups and hence Theorem~\ref{thrm:main} together with the work in~\cite{ks,s} gives a classification of all $\mathcal{B}_{pp}$-groups. This classification is easier to formulate for Frattini-free groups, that is, for groups $G$ with $\Phi(G)=1$. (Observe that $G$ is a $\mathcal{B}_{pp}$-group if and only if so is $G/\Phi(G)$.)
\begin{corollary}\label{corollary}
Let $G$ be a group with $\Phi(G)=1$. Then $G$ is a $\mathcal{B}_{pp}$-group and if only if one of the following holds:
\begin{enumerate}
\item\label{item1}$G$ is an elementary abelian $p$-group,
\item\label{item2}$G= P\rtimes Q$,
where $P$ is an elementary abelian $p$-group, $Q$ is a non-identity cyclic $q$-group
for distinct prime numbers $p$ and  $q$ such that $Q$ acts faithfully on $P$ and the $(\mathbb{Z}/p\mathbb{Z})[Q]$-module $P$ is a direct sum of pair-wise isomorphic simple modules,
\item\label{item3}$G$ is a direct product of groups given in~\eqref{item1} or in~\eqref{item2} with pair-wise coprime orders.
\end{enumerate} 
\end{corollary}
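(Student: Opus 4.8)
The plan is to prove the two implications separately, exploiting that the hypothesis $\Phi(G)=1$ turns the defining condition for a $pp$-base into the literal statement that $X$ is a generating set of prime-power-order elements with $\langle Y\rangle\subsetneq\langle X\rangle=G$ for every proper $Y\subsetneq X$. (The reduction to this case for the general classification is exactly the parenthetical remark above: lifting each prime-power element of $G/\Phi(G)$, of $\ell$-power order say, to the $\ell$-part of any preimage in $G$, together with the fact that $\langle X,\Phi(G)\rangle$ depends only on the image of $X$, puts the $pp$-bases of $G$ and of $G/\Phi(G)$ in size-preserving bijection, so $G$ is $\mathcal B_{pp}$ iff $G/\Phi(G)$ is.) Throughout I write $d$ for the common cardinality of the $pp$-bases whenever it exists.

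For the sufficiency (\emph{if}) direction I would verify each listed family directly. Case~\eqref{item1} is immediate, since in an elementary abelian $p$-group every element is a prime-power element and every independent generating set is an $\mathbb F_p$-basis, so all $pp$-bases have size equal to the rank. For Case~\eqref{item3} the key observation is that prime-power elements localise: if $G=\prod_i G_i$ with the $|G_i|$ pairwise coprime, then an element is a $pp$-element exactly when all but one of its components are trivial, whence every $pp$-base of $G$ is a disjoint union of $pp$-bases of the factors and its cardinality is $\sum_i d(G_i)$; thus $G$ is $\mathcal B_{pp}$ as soon as each $G_i$ is. The substantive case is~\eqref{item2}, $G=P\rtimes Q$: here a $pp$-element is either a $p$-element, forced into the unique normal Sylow $p$-subgroup $P$, or a $q$-element, and $p\neq q$ prevents any cyclic subgroup from mixing the two primes. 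Writing $P\cong S^{m}$ with $S$ a simple $\mathbb F_p[Q]$-module and $K=\mathrm{End}_{\mathbb F_p[Q]}(S)$ the associated field, the homogeneity hypothesis makes $P$ a $K$-vector space of dimension $m$ in which module generation coincides with $K$-spanning, so $P$ has a well-defined module rank $m$. I would then run an exchange argument: the images of a $pp$-base must generate the cyclic $q$-group $Q=G/P$, while its $P$-content must generate $P$ as a $Q$-module; faithfulness of the action controls how $q$-elements may be traded for $p$-elements, and the outcome is that every $pp$-base has cardinality $m+1$, even though a given base may distribute its members between $P$ and the $q$-part in different ways.

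For the necessity (\emph{only if}) direction, let $G$ be a Frattini-free $\mathcal B_{pp}$-group. By Theorem~\ref{thrm:main}, $G$ is solvable, so the classification of solvable $\mathcal B_{pp}$-groups of Krempa and Stocka~\cite{ks,s} applies, and the task is to read off that, under $\Phi(G)=1$, their description is precisely the trichotomy~\eqref{item1}--\eqref{item3}. Structurally this is governed by the Fitting subgroup: for a solvable group with $\Phi(G)=1$ one has $F(G)=\soc(G)$ a direct product of elementary abelian minimal normal subgroups, with $\cent{G}{F(G)}=F(G)$ and each minimal normal subgroup complemented. Grouping these factors and a complement according to the primes that occur and invoking the same localisation of $pp$-elements as in Case~\eqref{item3}, I would reduce to a single indecomposable piece; the $\mathcal B_{pp}$ condition then forces that piece to be either $F(G)$ itself (type~\eqref{item1}) or a semidirect product $P\rtimes Q$ with $Q$ cyclic acting faithfully, and a further application of the equal-cardinality property—comparing $pp$-bases built from distinct homogeneous components of $P$—forces $P$ to be isotypic, i.e.\ type~\eqref{item2}. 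Reassembling the pieces yields~\eqref{item3}.

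The step I expect to be the main obstacle is the last forcing in both directions: that \emph{failure} of homogeneity, faithfulness, or cyclicity of $Q$ genuinely produces two $pp$-bases of different sizes, and dually that homogeneity makes the common size well defined despite the freedom in distributing a base between $P$ and $Q$. This is exactly the delicate exchange phenomenon isolated by Krempa and Stocka, and the cleanest route is to quote their solvable classification rather than reconstruct the argument; the genuinely new ingredient contributed here is Theorem~\ref{thrm:main}, which excludes a priori any non-solvable $\mathcal B_{pp}$-group and thereby turns their solvable list into a complete one.
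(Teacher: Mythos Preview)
Your proposal is correct and, at its core, follows the paper's own route: invoke Theorem~\ref{thrm:main} to obtain solvability, then appeal to the Krempa--Stocka classification~\cite{ks,s} of solvable $\mathcal B_{pp}$-groups. The paper's proof is in fact the one-line version of exactly this---it simply cites~\cite[Theorem~1.2]{s} for both implications once solvability is in hand---whereas you additionally sketch direct verifications for the ``if'' direction and outline the structural reduction for the ``only if'' direction; this extra material is not needed, and you yourself note that quoting the solvable classification is the cleanest path.
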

The groups as in~\eqref{item2} are simply refereed to as scalar extensions in~\cite{s}. We refer the reader to the work of Krempa and Stocka~\cite{ks,s} for various motivations on investigating $\mathcal{B}_{pp}$-groups. Broadly speaking, this motivation is rooted on independent generating sets and on generalizations of the Burnside basis theorem; in turn, these motivations are useful for studying groups satisfying the exchange property for bases which is useful for constructing matroids starting from finite groups. 

As a bi-product of the arguments used in the proof of Theorem 
\ref{thrm:main}, we obtain the following result of independent interest. (See Section~\ref{notation} for undefined terminology.)

\begin{theorem}\label{bound}Let $G$ be a group and denote by $m(G)$ the largest cardinality of an independent generating set of $G$. Then $m(G)\geq a+b,$ where $a$ and $b$ are, respectively, the number of non-Frattini and non-abelian factors in a chief series of $G$.
\end{theorem}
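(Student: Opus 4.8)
The plan is to induct on $|G|$ and to build a large independent generating set by climbing a chief series one factor at a time. Fix a minimal normal subgroup $N$ of $G$ and set $\bar G=G/N$. A chief series of $\bar G$, together with $N$ at the bottom, is a chief series of $G$, so the numbers $a,b$ for $G$ exceed those for $\bar G$ by a controlled amount: $N$ contributes $0$ to both when $N\le\Phi(G)$ (a Frattini factor lies in the nilpotent group $\Phi(G)$ and is abelian); it contributes $1$ to $a$ and $0$ to $b$ when $N$ is non-Frattini abelian; and it contributes $1$ to each when $N$ is non-abelian, since a non-abelian chief factor is never contained in $\Phi(G)$ and is thus automatically non-Frattini. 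Using $m(\bar G)\ge a(\bar G)+b(\bar G)$ from the inductive hypothesis, the theorem reduces to the inequality $m(G)\ge m(\bar G)+\delta_N$, where $\delta_N\in\{0,1,2\}$ equals $0,1,2$ in the three cases above.

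The engine in every case is the \emph{projection principle}: since $\Phi(G)N/N\le\Phi(\bar G)$, any generating set of $G$ whose image in $\bar G$ is an independent generating set is automatically irredundant in its ``old'' entries. Concretely, removing a generator that projects to an indispensable generator of $\bar G$ leaves a subgroup whose image misses $\bar G$ modulo $\Phi(\bar G)$, hence misses $G$ modulo $\Phi(G)$. When $N\le\Phi(G)$ one has $\Phi(\bar G)=\Phi(G)/N$, so lifting a maximal independent generating set of $\bar G$ gives one of $G$ of the same size, and $\delta_N=0$ follows. When $N$ is non-Frattini abelian, $N\cap\Phi(G)=1$, and any maximal subgroup $M$ with $MN=G$ meets $N$ trivially, because $M\cap N\lhd\langle M,N\rangle=G$ by abelianness of $N$; thus $G=N\rtimes M$ with $N$ an irreducible $M$-module. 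Taking a maximal independent generating set $Y$ of $M\cong\bar G$ and any $1\ne n\in N$, the set $Y\cup\{n\}$ generates $G$ because $\langle n^M\rangle=N$, while removing $n$ yields $M\Phi(G)=M\ne G$ (as $\Phi(G)\le M$) and removing an element of $Y$ fails by the projection principle. Hence $\delta_N=1$.

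The non-abelian case $\delta_N=2$, with $N\cong S^k$ for a non-abelian simple group $S$, is the main obstacle. Here $N$ need not be complemented and $M\cap N$ may be a non-trivial subdirect subgroup, so the one-element trick above is unavailable. The plan is to lift a maximal independent generating set of $\bar G$ to a set $X$ with $H:=\langle X\rangle$ satisfying $HN=G$, and then to exhibit two elements $n_1,n_2\in N$ with $\langle H,n_1,n_2\rangle=G$ but $\langle H,n_1\rangle\ne G$ and $\langle H,n_2\rangle\ne G$; independence of the old generators is again free from the projection principle, and the properness of $\langle H,n_i\rangle$ is exactly what makes each $n_i$ indispensable. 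The crux, and the step I expect to be hardest, is to show that two elements are genuinely forced: one must control the smallest $H$-invariant subgroup of $N$ containing $H\cap N$ and a single element, and prove it can be kept proper. This is where non-abelianness is essential, since $S$ is $2$-generated but not cyclic, so a single element can be arranged to project to a proper subgroup of at least one simple component, and the transitive action of $H$ on the $k$ components propagates a suitable choice across all of them. Ruling out that one element ever suffices for a well-chosen $H$ is the delicate point, and may require replacing the naive lift of a generating set of $\bar G$ by one adapted to the component structure of $N$.

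Summing the increments $\delta_N$ along the chief series recovers $m(G)\ge a+b$: each non-Frattini abelian factor adds $1$, each non-abelian factor adds $2=1+1$ as it is counted in both $a$ and $b$, and Frattini factors add nothing.
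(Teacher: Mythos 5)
Your telescoping reduction is exactly the skeleton of the paper's proof: the paper also writes $m(G)=\sum_i\bigl(m(G/G_{i+1})-m(G/G_i)\bigr)$ and quotes from~\cite{l1} precisely the increments you rederive, namely $0$ for Frattini (necessarily abelian) factors and $1$ for non-Frattini abelian factors; your complement argument $G=N\rtimes M$ and the projection principle for that case are correct. The genuine gap is the entire non-abelian case. You correctly reduce the theorem to the inequality $m(G)\ge m(G/N)+2$ for a non-abelian minimal normal subgroup $N\cong S^k$, but there you offer only a plan and explicitly concede the key step (``the step I expect to be hardest''). That inequality is the actual content of Theorem~\ref{bound} --- everything else is already the bound $m(G)\ge a$ of~\cite{l1} --- and no soft argument for it is known.

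Concretely, your plan fails as stated because almost simple groups have large spread: by King's theorem (the paper's inequality $t(H)\le 2$, and even $t(H)=1$ in the delicate low-rank cases of Lemmas~\ref{technical1}--\ref{technical6}), for a carelessly chosen lift $H$ of a generating set of $G/N$ a \emph{single} element of $N$ typically does complete generation, so ``two elements are genuinely forced'' is simply false unless the lift is chosen with great care; your heuristic that $S$ is $2$-generated but not cyclic does not control the subdirect subgroup $\langle H,n_1\rangle\cap N$ and cannot rule out $\langle H,n_1\rangle=G$ for every nontrivial $n_1$ once $H\cap N$ projects onto the simple factors. Proving that a good configuration always exists is where the paper spends all of Section~\ref{proofs}: it does not work inside $G$ at all, but passes to the monolithic quotient $L=G/C_G(N)$ and then to the almost simple group $H$ induced on a simple factor, via the inequalities $\mu(G)\ge\mu(H)\ge m_K(H)$ from~\cite{l1,l2}, and then establishes Lemma~\ref{technical} (a supplement $K$ with $H=KS$ and $m_K(H)\ge 2$ always exists) by a case analysis along the Classification: explicit $K$-independent sets in $\Sym(n)$, machine computations for sporadic and small Lie-type groups, root-subgroup and parabolic arguments for Lie type, and fixed-point-ratio estimates combined with Feit's large Zsigmondy primes for the rank-one and other exceptional configurations. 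None of this machinery, or any substitute for it, appears in your proposal; what you have is an honest reduction plus the easy half of the theorem, with the hard half missing.
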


We have verified with a computer computation~\cite{magma} that the bound in Theorem~\ref{bound} is sharp when 
$G$ is the automorphism group of the alternating group of degree $6$: here, $m(G)=4$, $a=3$ and $b=1$. Theorem~\ref{bound} gives a strengthening of  the bound $m(G)\ge a$, which was proved in~\cite{l1}. Here, it was also proved that $m(G)=a$ for every solvable group.

The structure of the paper is straightforward. In Section~\ref{preliminaries} after establishing some notation, and after a short detour through fixed point ratios and spreads, we give some basic results. In Section~\ref{proofs} after establishing a few rather technical results, we prove Theorem~\ref{thrm:main} and Corollary~\ref{corollary}. Finally, we prove Theorem~\ref{bound} in Section~\ref{sec:3}.

\section{Preliminaries}\label{preliminaries}
\subsection{Notation}\label{notation}
Given a group $G$, we let $m(G)$ and $m_{pp}(G)$ denote the largest cardinality of an independent generating set of $G$ and of a $pp$-independent generating set for $G$. Since every $pp$-independent generating set is also an independent generating set, we have $m(G)\ge m_{pp}(G)$. In fact, in Lemma~\ref{l:1} we show that $m(G)=m_{pp}(G)$.

Let $$1=G_t \unlhd \dots \unlhd G_0=G$$
be a chief series for $G$. A factor $G_i/G_{i+1}$ is said to be a \textit{\textbf{non-abelian}} chief factor of $G$ if $G_i/G_{i+1}$ is a non-abelian group; moreover, $G_i/G_{i+1}$ is said to be a \textit{\textbf{Frattini}} chief factor of $G$ if $G_i/G_{i+1}\le \Phi(G/G_{i+1})$. 

The \textit{\textbf{socle}} of $G$, denoted by $\soc G$, is the subgroup generated by the minimal normal subgroups of $G$. In particular, if $\soc G$ is a minimal normal subgroup of $G$ (that is, $G$ has a unique minimal normal subgroup), then $G$ is said to be \textit{\textbf{monolithic}}.

Let $G$ be a monolithic group with socle $N$. Following the notation in~\cite{l2}, we define $\mu(G):=m(G)-m(G/N)$.

Given a positive integer $n$ and a group $H$, we denote by $H\mathrm{wr} \Sym(n)$ the \textit{\textbf{wreath product}} of $H$ with the symmetric group $\Sym(n)$ of degree $n$. We denote the elements of $H\mathrm{wr} \Sym(n)$ with ordered pairs $f\sigma$, where $f\in H^n$ and $\sigma\in \Sym(n)$.

Given two  positive integers $x$ and $n$ with $x,n\ge 2$, we say that the prime $r$ is a \textit{\textbf{primitive prime divisor}} of $x^n-1$ if $r$ divides $x^n-1$ and $r$ is relatively prime to $x^i-1$, for each $i\in \{1,\ldots,n-1\}$. From a celebrated theorem of Zsigmondy~\cite{z}, either $x^n-1$ has a primitive prime divisor, or $n=6$ and $x=2$, or $n=2$ and $x+1$ is a power of $2$. In the latter case, when $x$ is a prime power, we deduce that $x$ must be a (Mersenne) prime. We actually need the following refinement. The prime $r$ is said to be a \textit{\textbf{large primitive prime divisor}} of $x^n-1$ if $r$ is a primitive prime divisor of $x^n-1$ and either $r>n+1$ or $r^2$ divides $x^n-1$. We recall the classical result of Feit~\cite{Feit} on the existence of large primitive prime divisors. (We refer also to~\cite{Feit1}, for an elementary proof of this result.)
\begin{lemma}\label{feit}
If $x$ and $n$ are integers greater than $1$ there there exists a large primitive prime divisor for $x^n-1$ except exactly in the following cases:
\begin{enumerate}
\item $n=2$ and $x=2^s3^t-1$ for some natural numbers $s\ge 0$ and $t\in \{0,1\}$ with $s\ge 2$ if $t=0$,
\item $x=2$ and $n\in \{4,6,10,12,18\}$,
\item $x=3$ and $n\in \{4,6\}$,
\item $x=5$ and $n=6$.
\end{enumerate}
\end{lemma}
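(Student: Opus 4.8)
The plan is to prove the statement through the factorization $x^n-1=\prod_{d\mid n}\Phi_d(x)$ into cyclotomic polynomials (I write $\Phi_n$ for the $n$-th cyclotomic polynomial, not to be confused with the Frattini subgroup) together with a careful size estimate of $\Phi_n(x)$. First I would record the standard description of the prime divisors of $\Phi_n(x)$: for a prime $r\nmid x$ with multiplicative order $e$ of $x$ modulo $r$, one has $r\mid\Phi_n(x)$ if and only if $n/e$ is a power of $r$. In particular every prime divisor of $\Phi_n(x)$ is either a primitive prime divisor of $x^n-1$ (the case $e=n$) or divides $n$, and in the latter case it must be the largest prime divisor $p$ of $n$. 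A lifting-the-exponent computation then gives $v_p(\Phi_n(x))=1$ for all $n\ge 3$, the sole exception being $r=2$ with $n=2$, where $\Phi_2(x)=x+1$ may be divisible by an arbitrarily high power of $2$. This exception is exactly what produces the infinite family in case (1), so I would isolate $n=2$ from the start.

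The second step is to observe that a primitive prime divisor $r$ of $x^n-1$ satisfies $r\equiv 1\pmod n$, hence $r\ge n+1$. Consequently $r$ is automatically large when $r>n+1$, while for $r=n+1$ largeness is equivalent to $v_r(\Phi_n(x))\ge 2$ (using that primitivity forces $v_r(x^n-1)=v_r(\Phi_n(x))$). Thus, if $x^n-1$ has no large primitive prime divisor, then $\Phi_n(x)$ has at most one primitive prime divisor, it must equal $n+1$, and it occurs to the first power. Combined with the first step, for $n\ge 3$ this forces $\Phi_n(x)\in\{p,\,n+1,\,p(n+1)\}$, and in particular $\Phi_n(x)\le n(n+1)$, where $p$ is the largest prime divisor of $n$.

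The third step is to contradict this with a lower bound. Writing $\Phi_n(x)=\prod_{\gcd(k,n)=1}(x-\zeta_n^k)$ yields $\Phi_n(x)>(x-1)^{\phi(n)}$, which for $x\ge 3$ already gives $2^{\phi(n)}<n(n+1)$ and hence bounds $n$; for each of the finitely many surviving values of $n$ the same inequality bounds $x$. The case $x=2$ is the genuinely delicate one, since $(x-1)^{\phi(n)}=1$ is useless; here I would instead pair complex-conjugate roots to obtain a bound of the shape $\Phi_n(2)\ge 2^{\phi(n)/2}$ (or an explicit refinement), which again forces $\phi(n)$, and therefore $n$, to be small. Having reduced to a finite, explicitly bounded list of pairs $(x,n)$ with $n\ge 3$, I would evaluate $\Phi_n(x)$ directly and check that the condition $\Phi_n(x)\in\{p,\,n+1,\,p(n+1)\}$ holds exactly for the pairs listed in (2), (3), (4). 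Finally, for $n=2$ the analysis is elementary: $x^2-1$ fails to have a large primitive prime divisor precisely when every odd prime factor of $x+1$ equals $3$ and $9\nmid x+1$, i.e. when $x+1=2^s3^t$ with $t\le 1$, which is exactly case (1).

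I expect the main obstacle to be the two technical inputs underpinning the first and third steps: the exact multiplicity statement $v_p(\Phi_n(x))=1$ for $n\ge 3$, with its careful $r=2$ exception, and a clean, correct lower bound for $\Phi_n(2)$ strong enough to bound $n$. The remaining finite verification is routine, but it must be carried out in full to certify that the exceptional list is exactly as stated, with no omissions and no spurious entries.
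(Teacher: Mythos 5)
You should first be aware that the paper contains no proof of this lemma at all: it is recalled as a classical theorem, with \cite{Feit} cited for the original proof and \cite{Feit1} (Roitman) for an elementary one. So there is no in-paper argument to compare against; what you have produced is, in spirit, a reconstruction of the elementary cyclotomic route of the cited literature, and your outline is essentially sound. The two pillars you identify are both true and provable by the means you indicate: a non-primitive prime divisor of $\Phi_n(x)$ (cyclotomic polynomial, Euler function $\phi$, M\"obius function $\mu$ below) must be the largest prime $p$ dividing $n$, with $v_p(\Phi_n(x))=1$ for $n\ge 3$ by lifting-the-exponent for odd $p$ together with the observation that $\Phi_{2^j}(x)=x^{2^{j-1}}+1\equiv 2\pmod 4$ for $j\ge 2$ and odd $x$; and a primitive prime divisor satisfies $r\equiv 1\pmod n$, so the absence of a large one forces $\Phi_n(x)\in\{p,\,n+1,\,p(n+1)\}\,$, whence $\Phi_n(x)\le n(n+1)$. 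For your delicate case $x=2$, the bound you guess is correct and has a clean proof you should use instead of pairing conjugate roots: from $\Phi_n(2)=\prod_{d\mid n}(2^d-1)^{\mu(n/d)}$ one gets $\log_2\Phi_n(2)\ge \phi(n)+\sum_{d\ge 1}\log_2\bigl(1-2^{-d}\bigr)>\phi(n)-2$, so $\Phi_n(2)\ge 2^{\phi(n)/2}$ once $\phi(n)\ge 4$, the cases $n\in\{2,3,4,6\}$ being checked by hand. The only respect in which your text falls short of a proof is the one you concede yourself: the finite enumeration is deferred, and it must be done in \emph{both} directions --- that every pair $(x,n)$ surviving the bounds and not on the list satisfies $\Phi_n(x)\notin\{p,\,n+1,\,p(n+1)\}$, and that each listed pair genuinely lacks a large Zsigmondy prime (e.g.\ $\Phi_{18}(2)=57=3\cdot 19$ with $19=n+1$ to multiplicity one, and $\Phi_6(2)=3=p$ reflecting the Zsigmondy exception). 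Your $n=2$ analysis, including the normalization $s\ge 2$ when $t=0$ (which just encodes $x>1$), is correct. In short: a correct and essentially complete plan for a result the paper merely quotes, matching the known elementary proof strategy, with the finite verification still owed.
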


Our last two definitions are rather technical and (for our application) they only pertain to almost simple groups, but they will prove useful. Given an almost simple group $H$ with socle $S$ and a subgroup $K$ of $H$ with $H=KS$, let $$t(H,K)$$ be the smallest cardinality of a set $X$ of $pp$-elements in $S$ with $H=\langle K, X\rangle$. Then, define $$t(H):=\max\{t(H,K)\mid K\le H \hbox{ with } H=KS\}.$$ 
From~\cite[Theorem~1]{k}, $S$ is generated by an involution and by an element of odd prime power order and hence
\begin{equation}
\label{eq:silly2}
t(H)\le 2.
\end{equation}

Given a subgroup $K$ of $H$, we say that a subset $Y$ of $H$ is $K$-\textit{\textbf{generating}} for $H$ if $H=\langle K,Y\rangle$. A $K$-generating set for $H$ is said to be $K$-\textit{\textbf{independent}} if no proper subset of $Y$ generates $H$ together with $K$. We denote by $$m_K(H)$$ the largest cardinality of a $K$-independent generating set for $H$.

\subsection{A (short) walk through fixed point ratios and spreads}\label{fpr}
Let $H$ be an almost simple group with socle $S$ and let $g,s\in H$. We set
$$P(g,s):=\frac{|\{t\in s^H\mid \langle g,t\rangle\not\ge S\}|}{|s^H|}.$$
This definition is strictly related to the definition of spread and uniform spread in almost simple groups and we refer the reader to~\cite{bgk,GK} for further details.

For any action of $H$ on a set $\Omega$ and for any $g\in H$, consider the set $\mathrm{Fix}_\Omega(g):=\{\omega\in \Omega\mid \omega^g=\omega\}$ of fixed points of $g$ on $\Omega$ and the \textit{\textbf{fixed point ratio}}
$$\mu(g,\Omega):=\frac{|\mathrm{Fix}_\Omega(g)|}{|\Omega|}.$$
From~\cite[Section~$2$]{GK}, if $M\backslash H$ denotes the set of right cosets of the subgroup $M$ of $H$, then
\begin{equation}\label{eq:vvv}\mu(g,M\backslash H)=\frac{|g^H\cap M|}{|g^H|}.
\end{equation}
Let now $\mathcal{M}(H,g)$ be the collection of all maximal subgroups of $H$ containing $g$ and assume that $H$ is almost simple with socle $S$. Then, from~\eqref{eq:vvv}, we deduce
\begin{equation}\label{eq:spread}
P(g,s)\le \sum_{M\in \mathcal{M}(H,g)}\frac{|\{t\in
 s^H\mid \langle g,t\rangle\le M\}|}{|s^H|}=\sum_{M\in \mathcal{M}(H,s)}\frac{|\{h\in
 g^H\mid \langle h,s\rangle\le M\}|}{|g^H|}\le\sum_{M\in\mathcal{M}(H,s)}\mu(g,M\backslash H).
\end{equation}
Eq.~\eqref{eq:spread} also appears in~\cite[(2.4)]{bgk}.
We summarize in the following lemma the main application of fixed point ratios in our context.
\begin{lemma}\label{l:spread}
Let $H$ be an almost simple group with socle $S$. Suppose $H\ne S$. If, for every $g\in H\setminus S$,  there exists a $pp$-element $s_g\in S$ with  $P(g,s_g)<1$, then $t(H)=1$. In particular, if  $\sum_{M\in\mathcal{M}(H,s)}\mu(g,M\backslash H)<1$ for every $g\in H\setminus S$, then $t(H)=1$.
\end{lemma}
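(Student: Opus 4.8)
The plan is to establish the two inequalities $t(H)\ge 1$ and $t(H)\le 1$ separately, the upper bound being the one that uses the hypothesis. Throughout I would use the following elementary remark: for a subgroup $K$ with $H=KS$ and any $x\in H$, one has $\langle K,x\rangle=H$ if and only if $\langle K,x\rangle\ge S$, since $S\trianglelefteq H$ and $\langle K,x\rangle\ge S$ forces $\langle K,x\rangle\ge KS=H$.

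For the lower bound I would first note that $S\not\le\Phi(H)$: the Frattini subgroup $\Phi(H)$ is nilpotent, while $S$ is a nonabelian simple group, so $S\le\Phi(H)$ is impossible. Hence some maximal subgroup $M$ of $H$ does not contain $S$, and maximality together with $S\trianglelefteq H$ gives $M<MS\le H$, so $MS=H$. As $M$ is a proper subgroup, the empty set is not an $M$-generating set for $H$, so $t(H,M)\ge 1$ and therefore $t(H)\ge 1$.

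The substance of the lemma is the bound $t(H,K)\le 1$ for every $K\le H$ with $H=KS$. The case $K=H$ is trivial, so suppose $K\ne H$; then $K\not\le S$ (otherwise $KS=S\ne H$) and I may fix an element $g\in K\setminus S\subseteq H\setminus S$. By hypothesis there is a $pp$-element $s_g\in S$ with $P(g,s_g)<1$, and unwinding the definition of $P$ this says exactly that the set $\{t\in s_g^{H}:\langle g,t\rangle\ge S\}$ is nonempty; fix such a $t$. Since $t$ is $H$-conjugate to $s_g\in S$ and $S\trianglelefteq H$, the element $t$ lies in $S$ and, conjugation preserving order, is again a $pp$-element. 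Because $g\in K$, we get $\langle K,t\rangle\ge\langle g,t\rangle\ge S$, so $\langle K,t\rangle=H$ by the opening remark. Thus the single-element set $\{t\}$ witnesses $t(H,K)\le 1$; taking the maximum over all admissible $K$ yields $t(H)\le 1$, and combined with the lower bound, $t(H)=1$.

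The final ``in particular'' assertion follows at once from~\eqref{eq:spread}, which gives $P(g,s)\le\sum_{M\in\mathcal{M}(H,s)}\mu(g,M\backslash H)$ for every $pp$-element $s$; hence if the right-hand side is smaller than $1$, then $P(g,s)<1$, and the hypothesis of the first part is met. I expect no serious obstacle: the whole argument is a careful unwinding of definitions, and its only load-bearing idea is the reduction observation that a single well-chosen $pp$-element $t$ suffices, because $\langle g,t\rangle\ge S$ for one $g\in K\setminus S$ already drags $S$ into $\langle K,t\rangle$. The two points to watch are that the chosen conjugate $t$ remains inside $S$ (ensured by normality of $S$) and remains of prime power order (ensured since conjugation preserves element orders).
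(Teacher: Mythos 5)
Your proposal is correct and follows essentially the same route as the paper: fix $K$ with $H=KS$, pick $g\in K\setminus S$, unwind $P(g,s_g)<1$ to produce a conjugate $t\in s_g^H\subseteq S$ (still a $pp$-element) with $\langle g,t\rangle\ge S$, conclude $\langle K,t\rangle=H$, and deduce the ``in particular'' clause from~\eqref{eq:spread}. Your only addition is the explicit verification that $t(H)\ge 1$ (via $S\not\le\Phi(H)$ and a maximal subgroup $M$ with $MS=H$), a point the paper's proof passes over silently; this is a harmless refinement, not a different argument.
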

\begin{proof}
Let $K$ be a subgroup of $H$ with $H=KS$. For every $g\in K\setminus S$, let $s_g$ be a $pp$-element belonging to $S$ with $P(g,s_g)<1$. Then by definition of $P(g,s_g)$, there exists $t\in s_g^H$ with $\langle g,t\rangle\ge S$. Thus $H=\langle K,t\rangle$ and hence $t(H,K)=1$. Since this holds regardless of $K$, we have $t(H)=1$. The rest of the proof follows from~\eqref{eq:spread}.
\end{proof}


\subsection{Basic results}\label{basicresults}

\begin{lemma}\label{l:1}
Let $G$ be a group. Then $m(G)=m_{pp}(G)$.
\end{lemma}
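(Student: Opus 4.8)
The plan is to prove $m(G)=m_{pp}(G)$ by establishing the inequality $m(G)\le m_{pp}(G)$, since the reverse inequality $m(G)\ge m_{pp}(G)$ is immediate from the observation (noted earlier in the excerpt) that every $pp$-independent generating set is in particular an independent generating set. So the real content is to take an arbitrary independent generating set $X$ of $G$ of size $m(G)$ and produce a $pp$-independent generating set of the same cardinality. First I would reduce to the Frattini-free case: working modulo $\Phi(G)$ changes nothing, since independence is defined relative to $\langle\,\cdot\,,\Phi(G)\rangle$ and both $m$ and $m_{pp}$ only see $G/\Phi(G)$. So I assume $\Phi(G)=1$ and must realize $m(G)$ by a generating set of $pp$-elements.

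The key idea I would pursue is an element-by-element replacement. Given an independent generating set $X=\{x_1,\dots,x_k\}$ with $k=m(G)$, I want to replace each $x_i$ by a suitable $pp$-element without losing either the generating property or independence. The natural tool is the decomposition of each $x_i$ into its commuting prime-power components: writing the cyclic group $\langle x_i\rangle$ as the direct product of its Sylow subgroups, $x_i$ equals a product $x_i = \prod_p x_{i,p}$ of pairwise commuting $pp$-elements, each a power of $x_i$, and conversely $x_i$ lies in $\langle x_{i,p}\mid p\rangle$. The hope is to swap in the $pp$-components so that the new set still generates and still has exactly $k$ independent members. The delicate point is that replacing one $x_i$ by several of its prime-power parts could either increase the size (if I keep all the parts) or fail to preserve independence.

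The step I expect to be the main obstacle is controlling independence and cardinality simultaneously during this replacement. Merely substituting all prime-power components of every $x_i$ produces a generating set of $pp$-elements, but it may be larger than $k$ and need not be independent, and naively discarding elements to restore independence could drop the count below $k$. To handle this cleanly I would argue that from any independent generating set one can pass to a $pp$-independent generating set of cardinality \emph{at least} as large: one shows that an independent set that is maximal with respect to being $pp$-independent, chosen so that it generates the same subgroup modulo $\Phi(G)$ as $X$, must have size $\ge|X|$. Concretely, I would build a $pp$-independent generating set greedily, adding prime-power components one at a time and invoking a matroid-type exchange or a counting argument on chief factors to guarantee the final size is not smaller than $k$. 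Since by definition $m_{pp}(G)\le m(G)$ always, producing a single $pp$-base of size $m(G)$ forces equality, and that is exactly what this construction is designed to deliver.

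A cleaner alternative, which I would actually prefer to carry out, is to prove the stronger statement that for \emph{every} independent generating set $X$ there is a $pp$-independent generating set $X'$ with $|X'|=|X|$, obtained by replacing each $x_i$ individually. The crucial lemma here is that if $X\setminus\{x_i\}$ together with $\Phi(G)$ does not generate $G$, then there is at least one prime-power component $x_{i,p}$ of $x_i$ such that $(X\setminus\{x_i\})\cup\{x_{i,p}\}$ still generates $G$ modulo $\Phi(G)$; this follows because $x_i\in\langle x_{i,p}\mid p\rangle$, so at least one component must supply the missing generation over the subgroup generated by the rest. Performing this replacement for each index in turn, and checking that independence is preserved at each stage, converts $X$ into a generating set of $pp$-elements of the same size, which one then trims (if necessary, though the construction should avoid needing to) to a $pp$-independent set. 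The subtlety lies precisely in verifying that the single-component replacements can be done consistently across all $i$ without the earlier substitutions spoiling the generation argument for later ones; I would organize this as a finite induction on the indices, keeping track of the subgroup generated by the already-processed $pp$-elements together with the not-yet-processed original elements and $\Phi(G)$.
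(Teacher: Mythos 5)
Your reduction to $m(G)\le m_{pp}(G)$ and your use of the decomposition of each $x_i$ into commuting prime-power components with $\langle x_i\rangle=\langle x_{i,p}\mid p\rangle$ match the paper's starting point, but the argument you say you would actually carry out rests on a false lemma. Your ``crucial lemma'' asserts that some single component $x_{i,p}$ satisfies $\langle (X\setminus\{x_i\})\cup\{x_{i,p}\},\Phi(G)\rangle=G$, justified by saying that since $x_i\in\langle x_{i,p}\mid p\rangle$, ``at least one component must supply the missing generation.'' That inference is invalid: from $\langle M,x_{i,p}\mid p\rangle=G$ it does not follow that $\langle M,x_{i,p}\rangle=G$ for a single $p$. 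The smallest counterexample is $G=C_6$ with $X=\{x\}$, $x$ of order $6$: here $M=\langle X\setminus\{x\},\Phi(G)\rangle=\Phi(G)=1$ and each prime-power component of $x$ generates a proper subgroup ($C_2$ or $C_3$), yet together they generate $G$. The same example refutes your proposed ``stronger statement'' that \emph{every} independent generating set admits a $pp$-independent generating set of exactly the same size: no $pp$-element generates $C_6$, so there is no $pp$-base of size $|X|=1$. A size-preserving replacement can only possibly work when $|X|=m(G)$ is maximal, and then only via an argument you never give: if no single component of $x_i$ suffices, swapping in a minimal subset of two or more components yields a strictly \emph{larger} independent generating set (independence survives because the components are powers of $x_i$), contradicting maximality of $|X|$. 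Your fallback sketch (greedy construction plus a ``matroid-type exchange or a counting argument on chief factors'') is not developed enough to count as a proof.

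The paper sidesteps replacement entirely and proves only an inequality, which suffices because $m(G)$ is defined as a maximum: adjoin \emph{all} prime-power components of all the $x_i$ at once, obtaining a generating set of $pp$-elements; extract from it any $pp$-base $Y$; then independence of $X$ forces $Y$ to contain at least one component of each $x_i$ (if $Y$ missed every component of some $x_{\bar i}$, then $G=\langle Y\rangle\le\langle X\setminus\{x_{\bar i}\}\rangle$, contradicting independence of $X$), whence $m_{pp}(G)\ge |Y|\ge m(G)$. Note that this argument never needs $|Y|=m(G)$ and never needs a one-for-one correspondence between components and original generators — exactly the flexibility your size-preserving scheme lacks and the point at which it breaks.
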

\begin{proof}
As we have observed above, $m(G)\ge m_{pp}(G)$ and hence we only need to show that $m(G)\le m_{pp}(G)$.

Let $X:=\{x_1,\dots,x_{m(G)}\}$ be an independent generating set for $G$ of cardinality $m(G)$. For each $i\in \{1,\ldots,m(G)\}$, we may write $x_i=y_{1,i}\cdots y_{k_i,i}$, where 
$y_{1,i},\ldots,y_{k_i,i}$ are pair-wise commuting $pp$-elements of $G$ with 
\begin{equation}\label{eq:silly}\langle x_i\rangle=\langle y_{1,i},\ldots,y_{k_i,i}\rangle.
\end{equation} Clearly, $$\{y_{j,i}\mid 1\leq j\leq  k_i, 1\leq i \leq m(G)\}$$ is a generating set for $G$ consisting of $pp$-elements and  hence it contains a $pp$-base $Y$.

We claim that, for each $i\in \{1,\ldots,m(G)\}$, there exists $j\in \{1,\ldots,k_i\}$ with $y_{j,i}\in Y$. Indeed, if for some some $\bar{i}$, $Y$ contains no $y_{j,\bar{i}}$, then
$$G=\langle Y\rangle\le\langle y_{j,i}\mid i\in \{1,\ldots,m(G)\}\setminus\{\bar{i}\},j\in \{1,\ldots,k_i\}\rangle\le \langle X\setminus\{x_{\bar{i}}\}\rangle,$$
where in the last inequality we have used~\eqref{eq:silly}. However, this contradicts the fact that $X$ is independent and hence the claim is proved. 

The previous paragraph yields $|Y|\ge m(G)$ and hence the lemma follows because $m_{pp}(G)\ge |Y|$.
\end{proof} 

We now recall~\cite[Theorem~6.1~(1)]{ks}.
\begin{lemma}\label{induction}
If $G$ is a $\mathcal{B}_{pp}$-group, then every quotient of $G$ is a 
$\mathcal{B}_{pp}$-group.
\end{lemma}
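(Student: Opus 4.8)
The plan is to argue by contraposition for a single normal subgroup $N\unlhd G$, since an arbitrary quotient of $G$ is of the form $G/N$ and it suffices to treat one at a time. By Lemma~\ref{l:1} we may read the $\mathcal{B}_{pp}$ property as the assertion that every $pp$-base has the common cardinality $m_{pp}(G)$. Assume then, for a contradiction, that $G/N$ admits two $pp$-bases $\bar X$ and $\bar Y$ with $|\bar X|\neq|\bar Y|$; the goal is to manufacture from them two $pp$-bases of $G$ of different cardinalities.

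First I would lift. Each $pp$-element $\bar x\in\bar X$ has order a prime power $p^k$; choosing a preimage $g\in G$ and replacing it by its $p$-part $g_p$ yields a $pp$-element $x\in G$ of order $p^k$ with $xN=\bar x$. Doing this for every element produces a set $X$ of $pp$-elements, of the same cardinality as $\bar X$, with $\langle X\rangle N=G$. Using the standard containment $\Phi(G)N/N\le\Phi(G/N)$ and projecting modulo $N$, one checks that $X$ is itself $pp$-independent in $G$: a relation $\langle X',\Phi(G)\rangle=\langle X,\Phi(G)\rangle$ for a proper subset $X'$ would push down to $\langle\bar X',\Phi(G/N)\rangle=\langle\bar X,\Phi(G/N)\rangle$, contradicting the independence of $\bar X$. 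Next I would complete $X$ to a generating set of $G$ by adjoining a minimal set $C_X$ of $pp$-elements of $N$ with $\langle X,C_X\rangle=G$ (possible since $\langle X\rangle N=G$). The set $X\cup C_X$ is then a $pp$-base of $G$: deleting some $c\in C_X$ fails to generate by minimality of $C_X$, while deleting some $x\in X$ fails because, projecting modulo $N$, the image loses an element of the independent set $\bar X$ and so cannot generate $G/N$. Applying the $\mathcal{B}_{pp}$ hypothesis to $G$ now gives $|\bar X|+|C_X|=m_{pp}(G)$, and symmetrically $|\bar Y|+|C_Y|=m_{pp}(G)$.

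The crux -- and the step I expect to be the main obstacle -- is to show that the correction terms agree, i.e.\ $|C_X|=|C_Y|$, for this is exactly what converts the two displayed identities into $|\bar X|=|\bar Y|$ and delivers the contradiction. The difficulty is genuine: because independent sets need not form a matroid, the number of $pp$-elements of $N$ required to supplement a complement $\langle X\rangle$ up to $G$ depends a priori on how $X$ sits inside $G$ (conjugation by $X$ can already generate much of $N$), so $|C_X|$ is not visibly an invariant of the pair $N\le G$. I would resolve this by proving that this completion number is in fact rigid, combining the lifting construction above with the dual operation of restricting an arbitrary $pp$-base of $G$ to $G/N$ and matching cardinalities, thereby pinning $|C_X|$ to a single quantity attached to $N\le G$ and independent of the chosen quotient base. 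Once this rigidity is in hand, $|C_X|=|C_Y|$ forces $|\bar X|=|\bar Y|$, contradicting our assumption and proving that $G/N$ is a $\mathcal{B}_{pp}$-group.
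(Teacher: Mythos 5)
Your construction is sound up to the point you yourself flag: the lifting via $p$-parts, the independence of the lifted set via $\Phi(G)N/N\le\Phi(G/N)$, and the verification that $X\cup C_X$ is a $pp$-base of $G$ (using that $\Phi(G)$ consists of non-generators, so $\langle S,\Phi(G)\rangle=G$ if and only if $\langle S\rangle=G$, which protects the minimality of $C_X$, while projecting modulo $N$ protects the $X$-part) are all correct, and they yield the identities $|\bar X|+|C_X|=m_{pp}(G)$ and $|\bar Y|+|C_Y|=m_{pp}(G)$. But the argument then stalls exactly where the content of the lemma lies, and the proposed repair is circular. Given that $G\in\mathcal{B}_{pp}$, any inclusion-minimal completion $C_X$ automatically has $|C_X|=m_{pp}(G)-|\bar X|$, so the claim that $|C_X|$ is ``a single quantity attached to $N\le G$, independent of the chosen quotient base'' is literally a restatement of the desired conclusion $|\bar X|=|\bar Y|$, not an auxiliary fact from which it could be derived. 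You have reformulated the problem, not reduced it.

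Moreover, the ``dual operation'' you invoke to pin down $|C_X|$ --- restricting an arbitrary $pp$-base of $G$ to $G/N$ and matching cardinalities --- does not work: the image in $G/N$ of a $pp$-base of $G$ need not be a $pp$-base of $G/N$, nor even have the same cardinality. Already for $G=\mathrm{Sym}(3)$ and $N=\mathrm{Alt}(3)$, the $pp$-base $\{(1\,2),(1\,3)\}$ collapses to a single element of $G/N$; in general images can collide, land in $\Phi(G/N)$, or fail independence, so no cardinality bookkeeping comes out of restriction. For comparison, note that the paper does not prove this lemma at all: it is quoted from Krempa and Stocka~\cite[Theorem~6.1~(1)]{ks}, so the step you are missing is precisely the substance of that cited theorem. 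As it stands, your proposal is a correct reduction of the lemma to the rigidity claim $|C_X|=|C_Y|$, together with an acknowledgement that this claim is unproved --- that is, it contains a genuine gap at its crux.
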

\section{Proofs of Theorem~$\ref{thrm:main}$ and Corollary~$\ref{corollary}$}\label{proofs}
\subsection{Technical lemmas}
\begin{lemma}\label{technical1}Let $q$ be a prime power with $q\ge 4$ and let $H$ be an almost simple group with socle 
$S:=\mathrm{PSL}_2(q)$ and with $H\ne S$. Then $t(H)=1$.
\end{lemma}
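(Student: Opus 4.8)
The plan is to invoke Lemma~\ref{l:spread}: it suffices to show that for every $g\in H\setminus S$ there is a $pp$-element $s_g\in S$ with $P(g,s_g)<1$, equivalently (by~\eqref{eq:spread}) to find a $pp$-element $s\in S$ for which $\sum_{M\in\mathcal{M}(H,s)}\mu(g,M\backslash H)<1$. Here $S=\mathrm{PSL}_2(q)$ with $q\ge 4$, so $H$ lies in $\mathrm{P\Gamma L}_2(q)$, and the maximal subgroup structure of such almost simple groups is completely understood (Dickson's classification of subgroups of $\mathrm{PSL}_2(q)$, extended to the almost simple overgroups). First I would fix a convenient $pp$-element $s\in S$: the natural choice is an element of order a \emph{large primitive prime divisor} $r$ of $q^2-1$ (or, treating $q=p^f$, of the appropriate $p^i-1$), whose existence is guaranteed by Lemma~\ref{feit} outside a short explicit list of exceptional $q$. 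Such an $s$ is a $pp$-element, it is regular semisimple, and crucially it lies in very few maximal subgroups: its order $r$ forces any maximal subgroup containing $s$ to be (the $H$-normalizer of) a maximal torus normalizer, i.e.\ a dihedral-type subgroup, so $\mathcal{M}(H,s)$ is small and each member is controlled.

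The central estimate is then to bound $\mu(g,M\backslash H)=|g^H\cap M|/|g^H|$ for each $M\in\mathcal{M}(H,s)$ and each $g\in H\setminus S$. Since $M$ is essentially a torus normalizer of order roughly $q$ while $|H|$ is of order roughly $q^3$, the fixed point ratios $\mu(g,M\backslash H)$ are of order $1/q$ or smaller; summing over the few maximal subgroups containing $s$ gives a total bounded by something like $c/q$, which is $<1$ once $q$ is large enough. This reduces the whole problem, via Lemma~\ref{l:spread}, to a finite check for the small values of $q$ not covered by the generic bound together with the Feit-exceptional $q$ from Lemma~\ref{feit}. The main obstacle I anticipate is precisely this bookkeeping: one must handle the field and field-graph automorphisms in $g\in H\setminus S$ (which can have larger fixed point ratios on coset spaces of subfield subgroups), and one must separately dispose of the finitely many small $q$ (such as $q\in\{4,5,7,8,9\}$ and the Feit exceptions), typically by direct computation or by exhibiting an explicit $pp$-element $t$ with $\langle g,t\rangle\ge S$.

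For the generic argument I would organize the estimate according to the type of $g$. If $g$ induces a nontrivial field or graph-field automorphism, I would instead choose $s$ to be a $pp$-element that does not centralize the relevant subfield subgroup, so that $g$ and $s$ cannot both lie in a common subfield-type maximal subgroup; the known fixed point ratio bounds for $\mathrm{PSL}_2(q)$ (see~\cite{GK,bgk}) then give $\mu(g,M\backslash H)\le c/\sqrt{q}$ for subfield subgroups, still summing to less than $1$ for large $q$. If $g$ is a diagonal automorphism (so $H\le\mathrm{PGL}_2(q)$), the analysis is simplest, as the maximal overgroups of $s$ are just torus normalizers and the sum is manifestly $O(1/q)$. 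Throughout I would lean on the already-recorded bound $t(H)\le 2$ from~\eqref{eq:silly2} as a sanity check and, where a clean inequality is unavailable, fall back on the computational verification~\cite{magma} for the residual small cases, exactly as the paper does elsewhere.
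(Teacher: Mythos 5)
Your plan takes a genuinely different route from the paper's. The paper's proof of Lemma~\ref{technical1} never uses fixed point ratios: for each $K$ with $H=KS$ it fixes $\theta\in K\setminus S$ and an element $x$ generating a \emph{full} cyclic Sylow $r$-subgroup of $S$, where $r$ is a large primitive prime divisor of $q^2-1$, chosen so that $\theta$ does not normalize $\langle x\rangle$; Dickson's classification (via the tables in~\cite{bray}) then forces $S=\langle x,x^\theta\rangle$, the largeness of $r$ being used precisely to exclude the degenerations $\langle x,x^\theta\rangle\cong\mathrm{Alt}(4),\mathrm{Alt}(5)$, and hence $H=\langle K,x\rangle$ directly. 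Your spread/fixed-point-ratio strategy is the one the paper reserves for Lemmas~\ref{technical2}--\ref{technical6}; for $\mathrm{PSL}_2$ it is workable in the generic case but heavier, and the direct two-conjugate argument is what makes the paper's proof short.

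There are two genuine gaps in your version. First, your reduction to ``a finite check'' of the Feit exceptions is false for $n=2$: Lemma~\ref{feit}(1) excludes the \emph{infinite} family $q=p=2^s3^t-1$, so no finite computation (nor the list $q\in\{4,5,7,8,9\}$) covers it. The paper must, and does, handle all prime $q$ by a separate uniform argument: for $q=p$ one has $H=\mathrm{PGL}_2(p)$, and one takes $x$ generating a cyclic Sylow $p$-subgroup (a unipotent element), for which Dickson again yields $S=\langle x,x^\theta\rangle$. Your proposal has no analogue of this step, and without it the argument simply does not terminate. Second, the criterion $\sum_{M\in\mathcal{M}(H,s)}\mu(g,M\backslash H)<1$ is only sufficient for $P(g,s)<1$, not ``equivalent'', and it is unattainable \emph{for every} $g\in H\setminus S$ whenever $H/S$ is non-cyclic (e.g.\ $H=\mathrm{Aut}(\mathrm{PSL}_2(p^f))$ with $q$ odd and $f$ even): since $\mathrm{Out}(\mathrm{PSL}_2(q))$ is abelian, every $M$ with $S\le M<H$ is normal in $H$ and lies in $\mathcal{M}(H,s)$ (as $s\in S$), and if $\langle S,g\rangle<H$ some such $M$ contains $g$, giving $\mu(g,M\backslash H)=1$. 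So you cannot run the estimate over all of $H\setminus S$ inside $H$; you must fix $\theta\in K\setminus S$ and carry out the fixed-point-ratio bound inside $H_0:=\langle S,\theta\rangle$, where $\theta$ generates $H_0/S$ and the offending terms vanish, then recover $H=\langle K,t\rangle$ from $\langle\theta,t\rangle\ge S$ and $H=KS$.

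A smaller but real flaw: you take $s$ of order exactly $r$. In the borderline case where $r$ is large only because $r^2\mid q+1$, one can have $r=5$ (e.g.\ $q=49$, where $25\mid q+1$), and then an element of order $5$ lies in $\mathrm{Alt}(5)$ subgroups, which are maximal in $\mathrm{PSL}_2(q)$ for such $q$; your claim that $\mathcal{M}(H,s)$ consists only of torus normalizers fails and the sum acquires many extra terms. The paper's choice of $x$ generating the whole Sylow $r$-subgroup (so $|x|\ge r^2>5$ in this case) is what kills these subgroups, and you should adopt it. Finally, a cosmetic point: $\mathrm{PSL}_2(q)$ has no graph or graph-field automorphisms (the outer automorphisms are diagonal and field), so that branch of your case division is vacuous.
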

\begin{proof}
It suffices to prove that, for every subgroup $K$ of $H$ with $H=KS$, there exists a $pp$-element $x_K\in S$ with $H=\langle K,x_K\rangle$. Write $q:=p^f$, where $p$ is a prime number and $f$ is a positive integer.

Let $K\le H$ with $H=KS$ and let $\theta\in K\setminus S$.   Assume that $p^{2f}-1$ admits no large primitive prime divisor.  From Lemma~\ref{feit}, we deduce that either 
$$S\in \{
\mathrm{PSL}_2(4)=\mathrm{PSL}_2(5),
\mathrm{PSL}_2(8),
\mathrm{PSL}_2(32),
\mathrm{PSL}_2(64),
\mathrm{PSL}_2(512),
\mathrm{PSL}_2(9),
\mathrm{PSL}_2(27),
\mathrm{PSL}_2(125)
\},$$ or $f=1$ and $q=p=2^s3^t-1$ for some natural numbers  $s\ge 0$ and $t\in \{0,1\}$ with $s\ge 2$ if $t=0$. 
In the first eight exceptional cases, the result can be established with a direct inspection using, for instance, the assistance of the computer algebra system~\texttt{magma}~\cite{magma}. 
We now consider the case $q=p=2^s3^t-1$. Actually, we deal with the more general case that $q=p$ is a prime number. As $H\ne S$, we have  $H=\mathrm{PGL}_2(q)$.
Clearly, a Sylow $p$-subgroup of $S$ is cyclic; let $x\in S$ be an element generating a Sylow $p$-subgroup of $S$. Observe that we may choose $x$ so that $\theta$ does not normalize $\langle x\rangle$. Using the list of the maximal subgroups of $S$ (see for instance~\cite[Tables~$8.1$,~$8.2$]{bray}), we see that $S=\langle x,x^\theta\rangle$. Thus $H=\langle K,x\rangle$ and $t(H,K)=1$. 

Assume now that $p^{2f}-1$ admits a large primitive prime divisor  $r$. Observe that, from the previous paragraph, we may suppose that $f>1$. In particular, either $r>2f+1\ge 5$ or $r^2$ divides $q+1$. Clearly, a Sylow $r$-subgroup of $S$ is cyclic; let $x\in S$ be an element generating a Sylow $r$-subgroup of $S$. Observe that we may choose $x$ so that $\theta$ does not normalize $\langle x\rangle$ (this can be easily established by considering the structure of the subgroup lattice of $S$, see~\cite[Table~$8.1$]{bray}). Using the list of the maximal subgroups of $S$ (see for instance~\cite[Tables~$8.1$,~$8.2$]{bray}), we see that either 
\begin{itemize}
\item $S=\langle x,x^\theta\rangle$, or 
\item $r=5$ and $\langle x,x^\theta\rangle \cong \mathrm{Alt}(5)$, or 
\item $r=3$ and $\langle x,x^\theta\rangle$ is isomorphic to either $\mathrm{Alt}(4)$ or $\mathrm{Alt}(5)$.
\end{itemize}
In the first case, $H=\langle K,x\rangle$ and hence $t(H,K)=1$. In the last two cases, $r$  is the cardinality of a Sylow $r$-subgroup of $S$, because $5$ is the cardinality of a Sylow $5$-subgroup of $\mathrm{Alt}(5)$ and $3$ is the cardinality of a Sylow $3$-subgroup of $\mathrm{Sym}(4)$. However, this contradicts the fact that $r$ is a large primitive prime divisor of $p^{2f}-1$.
\end{proof}

\begin{lemma}\label{technical2U}
Let $q$ be a prime power and let $H$ be an almost simple group with socle $S:=\mathrm{PSU}_3(q)$ and with $H\ne S$. Then $t(H)=1$.
\end{lemma}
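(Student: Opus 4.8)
The plan is to follow the strategy of Lemma~\ref{technical1}. As there, it suffices to prove that for every subgroup $K$ of $H$ with $H=KS$ and every $\theta\in K\setminus S$ (which exists since $H=KS\ne S$ forces $K\not\le S$), there is a $pp$-element $x\in S$ with $\langle x,x^\theta\rangle=S$. Indeed, then $x,x^\theta=\theta^{-1}x\theta\in\langle K,x\rangle$, whence $S\le\langle K,x\rangle$ and $H=KS=\langle K,x\rangle$, giving $t(H,K)=1$; as $K$ is arbitrary, this yields $t(H)=1$. The element $x$ will be taken to generate a Sylow $r$-subgroup of $S$, where $r$ is a large primitive prime divisor of $q^6-1$. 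By Lemma~\ref{feit} (applied to the integers $q$ and $6$) such an $r$ exists unless $q\in\{2,3,5\}$; since $\mathrm{PSU}_3(2)$ is not simple, only $q\in\{3,5\}$ remain, and these I would dispose of by a direct computation in \texttt{magma}~\cite{magma}.

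So assume $r$ is a large primitive prime divisor of $q^6-1$. Then $r$ divides $\Phi_6(q)=q^2-q+1$ and is coprime to $p(q-1)(q+1)$; in particular $r>3\ge\gcd(3,q+1)$, so a Sylow $r$-subgroup $R$ of $S$ is cyclic and lies in a maximal (``Coxeter'') torus $T$ of order $(q^2-q+1)/\gcd(3,q+1)$. A generator $x$ of $R$ is a regular semisimple element with $C_S(x)=T$, so $x$ is a $pp$-element of the required kind, and distinct Coxeter tori share no nontrivial $r$-element.

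The crux is to pin down the maximal overgroups of $x$. Running through the classification of the maximal subgroups of $S$ (\cite[Tables~8.5, 8.6]{bray}), I expect that the only ones of order divisible by $r$ are the torus normalizers $N_S(T)\cong((q^2-q+1)/\gcd(3,q+1)){:}3$: the parabolic subgroup, the non-degenerate point stabilizer, and the imprimitive subgroup of type $(q+1)^2{:}\mathrm{Sym}(3)$ have orders free of the factor $q^2-q+1$; the subfield subgroups $\mathrm{PSU}_3(q_0)$ and $\mathrm{PSL}_2(q)$, and the extraspecial-type normalizers, likewise have order coprime to $r$; and the finitely many subgroups of class $\mathcal{S}$ (lying among $\mathrm{PSL}_2(7)$, $\mathrm{Alt}(6)$, $3{\cdot}\mathrm{Alt}(6)$, $3{\cdot}\mathrm{Alt}(7)$) involve only primes at most $7$, so they are excluded exactly by the hypothesis that $r$ is \emph{large} (either $r>7$, or $r^2\mid q^6-1$ and hence $|R|\ge r^2$ is too large to embed in such a group). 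This bookkeeping is the part I expect to be the main obstacle, since it is where the largeness of $r$ must be used delicately to isolate the Coxeter-torus normalizers.

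Finally I would choose $x$ so that $\theta$ does not normalize $\langle x\rangle=R$; this is possible from the subgroup structure of $S$, as the Sylow $r$-subgroups correspond bijectively to the Coxeter tori $C_S(\,\cdot\,)$ and $\theta$ cannot normalize all of them. Suppose $\langle x,x^\theta\rangle\ne S$. Then $\langle x,x^\theta\rangle$ lies in a maximal subgroup $M$ of $S$ with $r\mid|M|$, so by the previous paragraph $M=N_S(T')$ for some Coxeter torus $T'$; from $x\in T'\le C_S(x)=T$ and $|T'|=|T|$ we get $T'=T$, hence $x^\theta\in N_S(T)$. Since $R$ is characteristic in the cyclic group $T$ and $r\nmid 3$, the subgroup $R$ is the unique Sylow $r$-subgroup of the index-$3$ group $N_S(T)$, so both $x$ and $x^\theta$ lie in $R$ and generate it. This forces $x^\theta\in\langle x\rangle$, contradicting the choice of $x$. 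Therefore $\langle x,x^\theta\rangle=S$, completing the argument and giving $t(H)=1$.
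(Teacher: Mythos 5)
Your overall strategy is the same as the paper's (generator of a cyclic Sylow $r$-subgroup for a large primitive prime divisor $r$, chosen so that $\theta$ does not normalize $\langle x\rangle$, plus the tables in~\cite[Tables~8.5, 8.6]{bray} and a \texttt{magma} check for the Feit exceptions), but there is a genuine gap in the crucial bookkeeping step. You apply Lemma~\ref{feit} to the pair $(q,6)$, i.e.\ you take $r$ to be a large primitive prime divisor of $q^6-1$ \emph{to the base $q$}, and then claim that the subfield subgroups have order coprime to $r$. That claim is false in general: a base-$q$ primitive prime divisor is only guaranteed not to divide $q^i-1$ for $i\le 5$, and it can perfectly well divide $q_0^2-q_0+1$ when $q=q_0^k$ with $k\ge 5$ prime (for $k=3$ the multiplicative order of $q$ modulo such an $r$ would drop to $2$, contradicting primitivity, but for $k\ge 5$ it remains $6$). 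Concretely, take $q=1024=4^5$ and $r=13$: one checks $\mathrm{ord}(4 \bmod 13)=6$, hence $\mathrm{ord}(1024\bmod 13)=6$, so $13$ is a primitive prime divisor of $1024^6-1$ to the base $1024$, and it is large since $13>7$; yet $13=q_0^2-q_0+1$ for $q_0=4$ divides $|\mathrm{SU}_3(4)|$, and the subfield subgroup $\mathrm{SU}_3(4)$ is maximal in $\mathrm{SU}_3(1024)$. So $\mathcal{M}(H,x)$ contains members other than torus normalizers, your largeness argument (which you deploy only against the class-$\mathcal{S}$ subgroups, whose prime divisors are at most $7$) cannot exclude them --- $r$ can be arbitrarily big and still divide a subfield order --- and your final step ``$M=N_S(T')$'' collapses. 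Note also that in this example the Sylow $r$-subgroup of $S$ has order exactly $13$, so conjugates of $x$ really do lie in subfield subgroups.

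The paper avoids exactly this trap: writing $q=p^f$, it takes $r$ to be a large primitive prime divisor of $p^{6f}-1$ \emph{to the base $p$} (exponent $6f$, not $6$). Such an $r$ divides no $p^j-1$ with $j<6f$, hence is coprime to $|\mathrm{SU}_3(q_0)|$ for every proper subfield $q_0$ of $q$, as well as to the orders of all the other geometric maximal subgroups, and only the class-$\mathcal{S}$ candidates need the largeness of $r$. The price is a longer exceptional list in Lemma~\ref{feit}, namely $(f,p)\in\{(1,5),(1,3),(2,2),(3,2)\}$, i.e.\ $q\in\{3,4,5,8\}$, all settled by computer; your computational list $q\in\{3,5\}$ is accordingly too short (as it happens, $q=4$ and $q=8$ admit no dangerous subfield subgroups, so your argument would survive there, but the real failures are the infinitely many $q=q_0^k$ with $k\ge 5$ prime). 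Everything else in your write-up --- the reduction of $t(H)=1$ to finding $x$ with $\langle x,x^\theta\rangle=S$, the choice of $x$ not normalized by $\theta$, the torus/normalizer analysis, and the exclusion of class-$\mathcal{S}$ overgroups via largeness --- agrees with the paper and is sound; the proof is repaired simply by switching the base of the primitive prime divisor as above.
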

\begin{proof}
As  $\mathrm{PSU}_3(2)$ is solvable, we have $q>2$. Here the argument is similar to the proof of Lemma~\ref{technical1}: we use primitive prime divisors and the structure of the subgroup lattice of $S$, see~\cite[Tables~$8.5$,~$8.6$]{bray}. Write $q:=p^f$, where $p$ is a prime number and $f$ is a positive integer.

Let $K\le H$ with $H=KS$ and let $\theta\in K\setminus S$.  Assume $p^{6f}-1$ admits a large primitive prime divisor of $r$. Clearly, a Sylow $r$-subgroup of $S$ is cyclic; let $x\in S$ be an element generating a Sylow $r$-subgroup of $S$. Observe that we may choose $x$ so that $\theta$ does not normalize $\langle x\rangle$. Using the list of the maximal subgroups of $S$ (see~\cite[Tables~$8.5$,~$8.6$]{bray}), we see that $S=\langle x,x^\theta\rangle$ (here we are using the fact that $r$ is a large Zsigmondy prime and hence $\langle x,x^\theta\rangle$ cannot be contained in a maximal subgroup in the Aschbacher class $\mathcal{S}$ by~\cite[Table~$8.6$]{bray}). Thus $H=\langle K,x\rangle$ and $t(H,K)=1$.

It remains to consider the case that $p^{6f}-1$ does not admit a large primitive prime divisor. Lemma~\ref{feit} yields $(f,p)\in \{(1,5),(1,3),(2,2),(3,2)\}$. Here the proof follows with the invaluable help of the computer algebra system \texttt{magma}~\cite{magma}.
\end{proof}

\begin{lemma}\label{technical2}
Let $q$ be a prime power and let $H$ be an almost simple group with socle $S:=\mathrm{PSL}_3(q)$ and with $S<H\nleq \mathrm{P}\Gamma\mathrm{L}_3(q)$. Then $t(H)=1$.
\end{lemma}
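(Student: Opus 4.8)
The plan is to follow verbatim the strategy of the proofs of Lemmas~\ref{technical1} and~\ref{technical2U}. First I would fix a subgroup $K\le H$ with $H=KS$ and an element $\theta\in K\setminus S$, and reduce the whole statement to producing a single $pp$-element $x\in S$ with $S=\langle x,x^\theta\rangle$: since $x^\theta\in\langle K,x\rangle$, this gives $\langle K,x\rangle\ge\langle x,x^\theta\rangle=S$, hence $\langle K,x\rangle\ge KS=H$, so $t(H,K)=1$ and (as $K$ is arbitrary) $t(H)=1$. Writing $q=p^f$, the natural candidate is an element $x$ of order $r$, where $r$ is a large primitive prime divisor of $p^{3f}-1$; by Lemma~\ref{feit} such an $r$ exists unless $q\in\{4,9,16,25,64\}$, and those few groups I would settle directly with \texttt{magma}~\cite{magma}. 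Such an $r$ divides $q^2+q+1$, a Sylow $r$-subgroup of $S$ is cyclic, and $x$ lies in a unique maximal (Singer) torus $T=\cent{S}{x}$ of order $(q^2+q+1)/\gcd(3,q-1)$, with $\nor{S}{T}=T{:}3$.

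Next I would pin down $\mathcal M(S,x)$, the maximal subgroups of $S$ containing $x$, using the lists in~\cite[Tables~$8.3$,~$8.4$]{bray}. Since $r$ is a \emph{large} primitive prime divisor, $r$ is coprime to the orders of the parabolic subgroups, of the imprimitive ($\mathcal C_2$) subgroup $(q-1)^2{:}\Sym(3)$, of the extraspecial normalizer, and of the orthogonal subgroup of type $\mathrm{PGL}_2(q)$; moreover $r$ does not divide the order of a linear subfield subgroup $\mathrm{SL}_3(q_0)$, nor---once $q$ is large---of any subgroup in the Aschbacher class $\mathcal S$, exactly as largeness is exploited in Lemma~\ref{technical2U}. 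Thus, apart from a bounded list of exceptional overlaps (a unitary subfield $\mathrm{PSU}_3(q_0)$ when $q=q_0^2$ and $r\mid q_0^2-q_0+1$, and the subgroup $\mathrm{PSL}_2(7)$ when $r=7$), the only maximal subgroups of $S$ containing $x$ are the Singer normalizers $\nor{S}{T'}$.

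The crux, and the reason this case is separated from $S<H\le\mathrm{P}\Gamma\mathrm{L}_3(q)$, is the behaviour of the graph automorphism. Because $H\nleq\mathrm{P}\Gamma\mathrm{L}_3(q)$, the coset $\theta S$ contains a graph or graph-field automorphism, and the inverse-transpose map inverts every Singer torus it normalizes; so if $x$ happened to lie in a $\theta$-stable torus, then $x^\theta\in\langle x\rangle$ and $\langle x,x^\theta\rangle=\langle x\rangle\ne S$. The point I would establish is that $\theta$ cannot normalize every Singer torus: the centralizer $\cent{S}{\theta}$ is contained in a group of type $\mathrm{PGL}_2(q)$ (or a unitary/field analogue for graph-field $\theta$), so for $q$ large the class size $|S|/|\cent{S}{\theta}|$ exceeds $|\nor{\aut(S)}{T}|$, whence the $S$-class of $\theta$ cannot lie inside $\nor{\aut(S)}{T}$ and some Singer torus is moved by $\theta$. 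Choosing $x$ inside such a torus gives $T^\theta\ne T$; since $x^\theta$ lies in the unique torus $T^\theta\ne T$ while the order-$r$ elements of $\nor{S}{T}$ all lie in $T$, this yields $x^\theta\notin\nor{S}{T}$, so no Singer normalizer contains $\langle x,x^\theta\rangle$.

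Combining the previous two paragraphs, $\langle x,x^\theta\rangle$ lies in no maximal subgroup of $S$, hence equals $S$---provided the exceptional overlaps are cleared. For those I would either refine the choice of torus so that $x^\theta$ avoids the finitely many overlap subgroups, or bound $\sum_{M\in\mathcal M(H,x)}\mu(g,M\backslash H)<1$ and invoke Lemma~\ref{l:spread} through~\eqref{eq:spread}, the fixed point ratios of Singer normalizers and of the bounded overlap subgroups being small. The residual small values of $q$ (the Feit exceptions together with the finitely many genuinely small overlap configurations) would be checked with \texttt{magma}~\cite{magma}. I expect the main obstacle to be precisely the third paragraph: controlling the graph automorphism on the Singer tori and guaranteeing a moved torus, with the overlap bookkeeping of the final paragraph as the secondary technical difficulty.
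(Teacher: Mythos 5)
Your skeleton --- a large primitive prime divisor $r$ of $p^{3f}-1$, the Feit exceptions $q\in\{4,9,16,25,64\}$ settled by machine, the tables of~\cite[Tables~8.3, 8.4]{bray} to pin down the overgroups of $x$, and the unitary subfield subgroups for $q=q_0^2$ as the genuine obstruction --- matches the paper's proof, but your primary route (producing $x$ with $S=\langle x,x^\theta\rangle$, as in Lemmas~\ref{technical1} and~\ref{technical2U}) is not what the paper does here, and it has real gaps exactly where you fear. Your third paragraph is incomplete as a proof: $\theta$ is an \emph{arbitrary} element of $K\setminus S$ outside $\mathrm{P}\Gamma\mathrm{L}_3(q)$, not necessarily an involutory graph or graph-field automorphism, so the containment of $\cent{S}{\theta}$ in a $\mathrm{PGL}_2(q)$-type (or unitary) subgroup is not available in general, and the class-size comparison with $|\nor{\aut(S)}{T}|$ is only sketched and unverified for small $q$. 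The paper shows this entire step is avoidable: it invokes Lemma~\ref{l:spread} through~\eqref{eq:spread}, which only requires $\sum_{M\in\mathcal{M}(H,x)}\mu(\theta,M\backslash H)<1$, i.e.\ one works with the whole class $x^H$ rather than with a specific $x$ adapted to $\theta$. When $f$ is odd, $\mathcal{M}(H,x)=\{\nor{H}{\langle x\rangle}\}$ and the sum is a single fixed point ratio $|\theta^H\cap \nor{H}{\langle x\rangle}|/|\theta^H|<1$ (automatic, since the maximal subgroup is core-free), so $t(H,K)=1$ with no control whatsoever over how $\theta$ acts on Singer tori.

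The second, more serious gap is the $f$ even case, where your fallback coincides with the paper's actual argument but your alternative suggestion to ``refine the choice of torus so that $x^\theta$ avoids the finitely many overlap subgroups'' cannot work as stated: by a bipartite double count on $\Omega_1=\{\langle x^g\rangle\mid g\in H\}$ and $\Omega_2=\{\bar{M}^g\mid g\in H\}$, the paper shows that \emph{every} such $x$ lies in exactly $q_0^2+q_0+1$ maximal subgroups with $M\cap S\cong\mathrm{SU}_3(q_0)$ --- a number growing with $q$, not bounded --- so $|\mathcal{M}(H,x)|=q_0^2+q_0+2$ and no choice of Singer torus avoids the unitary overgroups. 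One must then do the quantitative work you left open: $\mu(\theta,M\backslash H)\le\gcd(3,q-1)/(q_0(q+1))$ for the unitary overgroups by~\cite[Lemma~2.10(ii)]{burness2} and $\mu(\theta,\nor{H}{\langle x\rangle}\backslash H)\le 4/(3q)$ by~\cite[Theorem~1]{LS}, whence $\sum_{M\in\mathcal{M}(H,x)}\mu(\theta,M\backslash H)<1$ for all $q\notin\{4,16\}$, with $q=16$ checked by computer. Finally, the paper excludes the Aschbacher class $\mathcal{S}$ overgroups wholesale from the largeness of $r$ via~\cite[Table~8.4]{bray}, so your putative $\mathrm{PSL}_2(7)$ overlap at $r=7$ does not enter its analysis. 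In short: you identified the correct ingredients and the correct pressure points, but the route through a $\theta$-moved Singer torus is both unfinished and unnecessary, and closing the $f$ even case requires the counting-plus-fixed-point-ratio computation rather than an avoidance argument.
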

\begin{proof}
As $\mathrm{PSL}_2(7)\cong \mathrm{PSL}_3(2)$, from Lemma~\ref{technical1}, we may suppose that $q>2$. Here the argument is similar to the proof of Lemma~\ref{technical1}: we use primitive prime divisors and the structure of the subgroup lattice of $S$, see~\cite[Tables~$8.3$,~$8.4$]{bray}. Write $q:=p^f$, where $p$ is a prime number and $f$ is a positive integer. As $q>2$, we have $(p,f)\ne (2,1)$.

Let $K\le H$ with $H=KS$ and let $\theta\in K\setminus S$. From Lemma~\ref{feit}, $p^{3f}-1$ has a large primitive prime divisors, except when $(p,f)\in \{(2,2),(2,4),(2,6),(3,2),(5,2)\}$. For these exceptional cases, we have checked the veracity of this lemma with a computer computation. In particular, for the rest of the argument, we let $r$ be a large primitive prime divisor of $p^{3f}-1$.

A Sylow $r$-subgroup of $S$ is cyclic; let $x\in S$ be an element generating a Sylow $r$-subgroup of $S$. Let $M\in \mathcal{M}(H,x)$.  Here we use the information in~\cite[Tables~$8.3$,~$8.4$]{bray}. From the list of the maximal subgroups of $H$ and recalling that $S<H\nleq \mathrm{P}\Gamma\mathrm{L}_3(q)$ and $r$ is a large primitive prime divisor, we deduce that either $M=\nor H{\langle x\rangle}$, or $f$ is even, $q=q_0^2$ and $M\cap S\cong \mathrm{SU}_3(q_0)$ (here we are using the fact that $r$ is a large Zsigmondy prime and hence $\langle x,x^\theta\rangle$ cannot be contained in a maximal subgroup in the Aschbacher class $\mathcal{S}$ by~\cite[Table~$8.4$]{bray}). In particular, when $f$ is odd, we have 
$\mathcal{M}(H,x)=\{\nor H{\langle x\rangle}\}$. Therefore, we deduce $$\sum_{M\in \mathcal{M}(H,x)}\mu(\theta,M\backslash H)= \mu(\theta,\nor{H}{\langle x\rangle}\setminus H)<1,$$ and hence $t(H,K)=1$, from Lemma~\ref{l:spread}.

Suppose now that $f$ is even and let $\bar{M}\in\mathcal{M}(H,x)\setminus\{\nor H{\langle x\rangle}\}$.  Then $\bar{M}\cap S\cong \mathrm{SU}_3(q_0)$, where $q=q_0^2=p^{f/2}$. Observe that from the ``$c$'' column in~\cite[Table~$8.42$]{bray}, we deduce that the maximal subgroups of $H$  with $\bar{M}\cap S$ isomorphic to $\mathrm{SU}_3(q_0)$ form $\gcd(q_0-1,3)$ $S$-conjugacy class.  Let $\Omega_1:=\{\langle x^g\rangle\mid g\in H\}$.
Using the information in~\cite[Table~$8.3$]{bray}, we deduce
$$|\Omega_1|=\frac{q^3(q^3-1)(q^2-1)}{(q^2+q+1)3}=\frac{q^3(q^2-1)(q-1)}{3}.$$

Let $\Omega_2:=\{\bar{M}^g\mid g\in H\}$. Using the information in~\cite[Table~$8.3$]{bray}, we deduce
$$|\Omega_2|=\frac{q^3(q^3-1)(q^2-1)}{(q_0^3+1)q_0^3(q_0^2-1)}=q_0^3(q_0^3-1)(q_0^2+1).$$
How, consider the bipartite graph having vertex set $\Omega_1\cup\Omega_2$ and having edge set consisting of the pairs $\{A,B\}$ with $A\in \Omega_1$, $B\in \Omega_2$ and $A\le B$. Fix $B\in \Omega_2$. Using the structure of the unitary  group $B$, we see that the number of $A\in \Omega_1$ with $A\le B$ is
$$\frac{(q_0^3+1)q_0^3(q_0^2-1)}{(q_0^2-q_0+1)3}=\frac{q_0^3(q_0^2-1)(q_0+1)}{3}.$$
In particular, the number of edges of the bipartite graph is
$$|\Omega_2|\frac{q_0^3(q_0^2-1)(q_0+1)}{3}
=\frac{q^3(q^2-1)(q_0^3-1)(q_0+1)}{3}.$$
This shows that the number of elements in $\Omega_2$ containing the element $\bar{M}\in \Omega_1$ is
$$\frac{\frac{q^3(q^2-1)(q_0^3-1)(q_0+1)}{3}}{|\Omega_1|}=q_0^2+q_0+1.$$
Thus
$$|\mathcal{M}(H,x)|=|\{\nor H{\langle x\rangle}\}\cup\{M\in \Omega_2\mid x\in M\}|=q_0^2+q_0+2.$$

From~\cite[Lemma~$2.10$~(ii)]{burness2}, we have $\mu(\theta,M\backslash H)\le \gcd(3,q-1)/(q_0(q+1))$ for every $M\in \mathcal{M}(\theta,M\setminus H)$ with $M\cap S\cong \mathrm{SU}_3(q_0)$. Moreover, from ~\cite[Theorem~1]{LS}, we have $\mu(\theta,\nor H{\langle x\rangle}\backslash H)\le 4/(3q)$. 
 Therefore
$$\sum_{M\in \mathcal{M}(H,x)}\mu(\theta,M\backslash H)\le\gcd(3,q-1)\frac{q_0^2+q_0+1}{q_0(q+1)}+\frac{4}{3q}<1,$$
whenever $q\notin\{4,16\}$. Since we have excluded the case $q=4$ above, it remains to deal with $q=16$. This case, yet again, has been dealt with a computer computation.
 Now Lemma~\ref{l:spread} shows that $t(H)=1$.
\end{proof}

\begin{lemma}\label{technical3}Let $e$ be a positive integer, let 
$q=3^{2e+1}$
 and let $H$ be an almost simple group with socle 
$S:={}^2G_2(q)$
 and with $H\ne S$. Then $t(H)=1$.
\end{lemma}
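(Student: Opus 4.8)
The plan is to follow the template established in Lemmas~\ref{technical1}--\ref{technical2}, adapted to the Ree groups ${}^2G_2(q)$. First I would write $q=3^{2e+1}$ and recall the structure of the subgroup lattice of $S={}^2G_2(q)$ from the standard reference (the maximal subgroups of ${}^2G_2(q)$ are classically known, due to Kleidman). The key arithmetic input is that $S$ has cyclic maximal tori whose orders involve the cyclotomic-type factors $q+1$, $q-1$, $q+\sqrt{3q}+1$ and $q-\sqrt{3q}+1$; the latter two divide $q^6-1$ (in fact $q^2-\sqrt{3q}q+q-\dots$ governs $\Phi_{12}$-type divisors) and are coprime to $|\mathrm{Out}(S)|=2e+1$. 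I would invoke Lemma~\ref{feit} (or a direct Zsigmondy-type argument on $3^{6(2e+1)}-1$) to produce a \emph{large} primitive prime divisor $r$ dividing one of these factors, with only finitely many small exceptional values of $e$ to be handled separately by computer as in the previous lemmas.

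Next, let $K\le H$ with $H=KS$ and pick $\theta\in K\setminus S$. Since a Sylow $r$-subgroup of $S$ is cyclic, choose a generator $x\in S$ of such a Sylow subgroup, selected so that $\theta$ does not normalise $\langle x\rangle$ (possible by inspecting the cyclic structure of the relevant torus and its normaliser). The crucial point is to determine $\mathcal{M}(H,x)$: because $r$ is a \emph{large} primitive prime divisor, $x$ cannot lie in any maximal subgroup of class $\mathcal{S}$ nor in any proper subfield or reducible subgroup, so essentially the only maximal subgroup of $S$ (and hence of $H$) containing $x$ is the torus normaliser $\nor{H}{\langle x\rangle}$. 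Once $\mathcal{M}(H,x)=\{\nor{H}{\langle x\rangle}\}$ is established, either I argue directly that $S=\langle x,x^\theta\rangle$ (since the only overgroup of $\langle x\rangle$ is its normaliser, and $\theta$ moves $\langle x\rangle$), giving $H=\langle K,x\rangle$ and $t(H,K)=1$; or, if a second class of overgroups survives, I bound the fixed-point-ratio sum $\sum_{M\in\mathcal{M}(H,x)}\mu(\theta,M\backslash H)<1$ using the standard fixed-point-ratio estimates for ${}^2G_2(q)$ (e.g.\ from Burness's work) and conclude via Lemma~\ref{l:spread} that $t(H)=1$.

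The main obstacle I anticipate is twofold. First, I must verify that the large primitive prime divisor $r$ can always be chosen so that its torus is \emph{not} contained in any of the potentially troublesome maximal subgroups; for ${}^2G_2(q)$ the maximal subgroups include copies of ${}^2G_2(q_0)$ for subfield $q=q_0^k$, normalisers of elementary abelian $2$-subgroups (giving $2\times\mathrm{PSL}_2(q)$), and Frobenius-type normalisers $D_{2(q\pm 1)}$ and the cyclic torus normalisers of orders $6(q+\sqrt{3q}+1)$ and $6(q-\sqrt{3q}+1)$. I would target $r\mid q\pm\sqrt{3q}+1$ precisely because such $r$ avoids the subfield subgroups and the $\mathrm{PSL}_2(q)$-type subgroups, leaving only the torus normaliser. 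Second, the exceptional base cases (smallest $e$, notably $q=27$) where a large primitive prime divisor fails to exist must be dispatched by direct computation in \texttt{magma}~\cite{magma}, exactly as in the earlier lemmas; I expect this to be routine but it is where the genuine case-checking labour lies.
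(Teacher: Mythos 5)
Your proposal is correct and follows essentially the same route as the paper: a primitive prime divisor $r$ of $q^6-1$ (necessarily dividing $q^2-q+1=(q+\sqrt{3q}+1)(q-\sqrt{3q}+1)$) yields a cyclic Sylow $r$-subgroup whose generator $x$ has $\mathcal{M}(H,x)=\{\nor{H}{\langle x\rangle}\}$ by Kleidman's list of maximal subgroups, and one concludes $t(H)=1$ via Lemma~\ref{l:spread}. The only difference is that your Feit/large-ppd machinery and the anticipated \texttt{magma} check at $q=27$ are superfluous: since $6(2e+1)\ge 18$ and $x=3$ lies outside the exception lists, an ordinary Zsigmondy prime always exists, and the paper needs neither largeness of $r$ nor the condition that $\theta$ move $\langle x\rangle$, since the single fixed-point-ratio bound $\mu(\theta,\nor{H}{\langle x\rangle}\backslash H)<1$ already suffices.
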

\begin{proof}
Let $K\le H$ with $H=KS$ and let $\theta\in K\setminus S$.  Let $r$ be a primitive prime divisor of $q^6-1$. From the structure of the Ree groups ${}^2G_2(q)$, we deduce that the Sylow $r$-subgroups of $S$ are cyclic. Let $x\in S$ be an element generating a Sylow $r$-subgroup of $S$. Using the list of the maximal subgroups of $S$~\cite[Tables~$8.43$]{bray}, we deduce that $|\mathcal{M}(H,x)|=1$. Indeed, $\mathcal{M}(H,x)=\{\nor H{\langle x\rangle}\}$. From~\eqref{eq:spread}, we have $P(\theta,x)\le \mu(\theta,\nor H{\langle x\rangle}\backslash H)<1$. Now Lemma~\ref{l:spread} shows that $t(H)=1$.
\end{proof}

\begin{lemma}\label{technical4}Let $e$ be a positive integer, let 
$q=2^{2e+1}$ and let $H$ be an almost simple group with socle 
$S:={}^2B_2(q)$ and with $H\ne S$. Then $t(H)=1$.
\end{lemma}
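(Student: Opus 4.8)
The plan is to mirror the structure of the preceding lemmas (Lemmas~\ref{technical1}--\ref{technical3}) and reduce everything to an application of Lemma~\ref{l:spread} via the fixed-point-ratio bound in~\eqref{eq:spread}. Fix a subgroup $K\le H$ with $H=KS$ and an element $\theta\in K\setminus S$; it suffices to produce a single $pp$-element $x\in S$ with $P(\theta,x)<1$, since this yields $t(H,K)=1$ and, $K$ being arbitrary, $t(H)=1$. The natural candidate for $x$ is a generator of a cyclic Sylow $r$-subgroup of $S$, where $r$ is a primitive prime divisor chosen to force $\langle x\rangle$ into as few maximal subgroups as possible.

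The key structural input is the subgroup lattice of the Suzuki groups ${}^2B_2(q)$, for which I would consult the list of maximal subgroups in~\cite[Table~$8.16$]{bray}. The maximal tori of ${}^2B_2(q)$ have orders $q-1$, $q+\sqrt{2q}+1$ and $q-\sqrt{2q}+1$, and the latter two (the factors of $q^2+1=(q-\sqrt{2q}+1)(q+\sqrt{2q}+1)$) are the relevant cyclic Sylow tori. First I would take $r$ to be a primitive prime divisor of $q^4-1$: such $r$ divides $q^2+1$ and hence one of these two torus orders, and $r$ exists by Zsigmondy since $q=2^{2e+1}\ge 8$ avoids all exceptional cases. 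The Sylow $r$-subgroup is then cyclic, I let $x$ generate it, and the point is that a primitive prime divisor of $q^4-1$ divides the order of essentially no proper subgroup of $S$ other than the normalizer of the torus $\nor{H}{\langle x\rangle}$: the subfield subgroups ${}^2B_2(q_0)$, the Frobenius-type subgroups of order $q^2(q-1)$, and the small sporadic subgroups all have order coprime to such an $r$. I would conclude $\mathcal{M}(H,x)=\{\nor{H}{\langle x\rangle}\}$, exactly as in the Ree-group argument of Lemma~\ref{technical3}.

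With $|\mathcal{M}(H,x)|=1$ established, the chain~\eqref{eq:spread} collapses to
\[
P(\theta,x)\le \mu\bigl(\theta,\nor{H}{\langle x\rangle}\backslash H\bigr),
\]
and it remains to see that this single fixed-point ratio is strictly less than $1$, i.e.\ that $\theta$ does not fix every coset, equivalently $\theta^H\cap\nor{H}{\langle x\rangle}\ne \theta^H$. Since $\theta\in H\setminus S$ and the torus normalizer $\nor{H}{\langle x\rangle}$ is a very small subgroup (its intersection with $S$ is a cyclic torus extended by a small cyclic group of field-type automorphisms), this is immediate: $\mu(\theta,\nor{H}{\langle x\rangle}\backslash H)<1$ holds trivially because $\nor{H}{\langle x\rangle}\ne H$. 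Then Lemma~\ref{l:spread} gives $t(H)=1$ and we are done.

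The main obstacle I anticipate is the bookkeeping in the second step: verifying that a primitive prime divisor of $q^4-1$ genuinely rules out every maximal-subgroup type in~\cite[Table~$8.16$]{bray}, including the possibility that $\langle x,x^\theta\rangle$ lies inside a subfield subgroup ${}^2B_2(q_0)$ with $q=q_0^k$. Because the Suzuki case forces $q=2^{2e+1}$ to be an odd power of $2$, subfield subgroups correspond to $q_0=2^{(2e+1)/k}$ with $k$ an odd divisor of $2e+1$, and I would check that $r\mid q^2+1$ cannot divide $|{}^2B_2(q_0)|=q_0^2(q_0^2+1)(q_0-1)$ by the primitivity of $r$ together with the fact that $q_0^2+1$ divides $q^2-1$ whenever $k>1$ is odd. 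Unlike the earlier linear and unitary cases, no Feit-type exceptional small cases should survive here, so I would not expect to need any separate \texttt{magma} verification, making this lemma the cleanest of the four.
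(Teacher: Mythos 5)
Your proposal follows the paper's proof essentially verbatim: take a primitive prime divisor $r$ of $q^4-1$, let $x$ generate the cyclic Sylow $r$-subgroup, deduce from~\cite[Table~$8.16$]{bray} that $\mathcal{M}(H,x)=\{\nor{H}{\langle x\rangle}\}$, and conclude via $P(\theta,x)\le\mu(\theta,\nor{H}{\langle x\rangle}\backslash H)<1$ and Lemma~\ref{l:spread} --- and, as you anticipated, the paper likewise needs no \texttt{magma} cases for this lemma. One small slip in a side remark: for $q=q_0^k$ with $k>1$ odd, $q_0^2+1$ divides $q^2+1$, not $q^2-1$; this is harmless, since the primitivity of $r$ alone already gives $r\nmid q_0^2(q_0^2+1)(q_0-1)=|{}^2B_2(q_0)|$, which is all your subfield-subgroup check requires.
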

\begin{proof}
Let $K\le H$ with $H=KS$ and let $\theta\in K\setminus S$.  Let $r$ be a primitive prime divisor of $q^4-1$. From the structure of the Suzuki groups ${}^2B_2(q)$, we deduce that the Sylow $r$-subgroups of $S$ are cyclic. Let $x\in S$ be an element generating a Sylow $r$-subgroup of $S$. Using the list of the maximal subgroups of $S$~\cite[Tables~$8.16$]{bray}, we deduce that $|\mathcal{M}(H,x)|=1$ and $\mathcal{M}(H,x)=\{\nor H{\langle x\rangle}\}$. Now, the proof follows as in the proof of Lemma~\ref{technical3}.
\end{proof}

\begin{lemma}\label{technical5}Let $e$ be a positive integer with $e\ge 1$, let
 $q=3^{e}$ and let $H$ be an almost simple group with socle $S:=G_2(q)$ and with $H$ containing an outer automorphism which is not a field automorphism. Then $t(H)=1$.
\end{lemma}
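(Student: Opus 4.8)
The plan is to follow the same template established in Lemmas~\ref{technical3} and~\ref{technical4}, namely to locate a single large primitive prime divisor whose associated cyclic torus sits inside a unique maximal subgroup, so that the fixed-point-ratio criterion in Lemma~\ref{l:spread} applies at once. Write $q=3^e$ and let $K\le H$ with $H=KS$ and $\theta\in K\setminus S$. First I would invoke Lemma~\ref{feit} to produce a large primitive prime divisor $r$ of $q^6-1=3^{6e}-1$, checking that the excluded cases of Lemma~\ref{feit} (here $x=3$ with $n=6e\in\{4,6\}$, i.e. small $e$) are handled separately by a direct \texttt{magma} computation. The element of order $r$ lies in the maximal torus of $G_2(q)$ of order $\Phi_6(q)=q^2-q+1$ (the divisor of $q^6-1$ not dividing $q^i-1$ for $i<6$), and since $\gcd(r,|Z|)=1$ for the relevant structure constants, a Sylow $r$-subgroup of $S$ is cyclic. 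Let $x\in S$ generate it.

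The key structural step is to pin down $\mathcal{M}(H,x)$ using the list of maximal subgroups of $G_2(q)$ in \cite[Table~8.41]{bray}. Because $r$ is a \emph{large} primitive prime divisor of $q^6-1$, any maximal subgroup containing $x$ must have order divisible by $r$; scanning the table, the subfield subgroups, the parabolics, and the reductive-type subgroups $\mathrm{SL}_3(q).2$, $\mathrm{SU}_3(q).2$, $(\mathrm{SL}_2(q)\circ\mathrm{SL}_2(q)).2$ either have order coprime to $\Phi_6(q)$ or are ruled out, and the Aschbacher-class-$\mathcal{S}$ subgroups ($\mathrm{PSL}_2(13)$, $G_2(2)$, $J_1$, etc.) are excluded precisely by the largeness of $r$ (their orders are too small to admit an element of order $r$ when $r>n+1$ or $r^2\mid q^6-1$). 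What survives is the torus normalizer, so I expect $\mathcal{M}(H,x)=\{\nor{H}{\langle x\rangle}\}$, exactly as in the Suzuki and Ree cases.

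With a unique maximal overgroup, the estimate collapses: from \eqref{eq:spread} we get
\begin{equation*}
P(\theta,x)\le \sum_{M\in\mathcal{M}(H,x)}\mu(\theta,M\backslash H)=\mu(\theta,\nor{H}{\langle x\rangle}\backslash H)<1,
\end{equation*}
the strict inequality holding because $\theta\in H\setminus S$ cannot fix every coset of the torus normalizer (equivalently, $\nor{H}{\langle x\rangle}$ does not contain the full conjugacy class $\theta^H$, since $\theta$ maps to a nontrivial outer automorphism while the torus normalizer meets $H\setminus S$ in a proper way). Then Lemma~\ref{l:spread} yields a $pp$-element $t\in x^H$ with $H=\langle K,t\rangle$, whence $t(H,K)=1$; as $K$ was arbitrary, $t(H)=1$.

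The main obstacle I anticipate is controlling the class-$\mathcal{S}$ maximal subgroups and verifying that the largeness condition on $r$ genuinely eliminates all of them across all $e$, together with isolating the genuinely small exceptional values of $e$ where Lemma~\ref{feit} gives no large primitive prime divisor (and where the subfield subgroup $G_2(q_0)$ or an exceptional $\mathcal{S}$-subgroup could spuriously contain $x$); these finitely many cases will have to be discharged by explicit computation in \texttt{magma}. The hypothesis that $H$ contains an outer automorphism which is \emph{not} a field automorphism — i.e.\ the graph (triality-like) automorphism present only when $p=3$ — is what guarantees the relevant $\theta$ moves the torus appropriately and is presumably needed to keep $\mu(\theta,\nor{H}{\langle x\rangle}\backslash H)$ strictly below $1$; I would keep track of this hypothesis precisely at the step where strictness of the fixed-point-ratio bound is asserted.
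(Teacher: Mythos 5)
There is a genuine gap at the central structural step: your claim that ``what survives is the torus normalizer, so $\mathcal{M}(H,x)=\{\nor{H}{\langle x\rangle}\}$, exactly as in the Suzuki and Ree cases'' is false for every odd $e$. When $q=3^e$ with $e$ odd, $S=G_2(q)$ has maximal subgroups with $M\cap S\cong {}^2G_2(q)$, and $|{}^2G_2(q)|=q^3(q^3+1)(q-1)$ is divisible by $q^2-q+1=(q+\sqrt{3q}+1)(q-\sqrt{3q}+1)$, hence by \emph{every} primitive prime divisor $r$ of $q^6-1$, large or not; since both factors of $q^2-q+1$ divide the Ree group's order, you cannot even dodge these overgroups by choosing $r$ in the other factor. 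Largeness of $r$ only disposes of the bounded-order class-$\mathcal{S}$ subgroups; it does nothing against ${}^2G_2(q)$. Relatedly, your order scan is wrong about $\mathrm{SU}_3(q).2$: its order is divisible by $q^2-q+1$ (note $\gcd(3,q+1)=1$ for $q=3^e$), so $x$ genuinely lies in such subgroups of $S$. What eliminates them from $\mathcal{M}(H,x)$ is precisely the hypothesis that $H$ contains a non-field outer (graph) automorphism, which fuses the $S$-classes of $\mathrm{SL}_3(q).2$ and $\mathrm{SU}_3(q).2$, so that neither class extends to a maximal subgroup of $H$. You instead locate the use of this hypothesis at the strictness of $\mu(\theta,\nor{H}{\langle x\rangle}\backslash H)<1$, which misidentifies its role (strictness there follows simply because $\langle\theta^H\rangle\geq S\not\leq M$, so $\theta^H\not\subseteq M$).

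Because of the Ree overgroups, the single-maximal-subgroup shortcut cannot work for odd $e$, and the paper's proof has a substantial second half that your proposal is missing: choosing $r\mid q+\sqrt{3q}+1$, it runs a bipartite double-counting argument on $\Omega_1=\{\langle x^g\rangle\mid g\in H\}$ and the conjugates of $\bar{M}$ (a unique $H$-class, read off from the ``$c$'' column of~\cite[Table~$8.42$]{bray}) to show that $x$ lies in exactly $q-\sqrt{3q}+1$ Ree maximals, whence $|\mathcal{M}(H,x)|=q-\sqrt{3q}+2$; it then invokes the fixed point ratio bound of Lawther--Liebeck--Seitz~\cite[Theorem~1]{LLS}, $\mu(\theta,M\backslash H)<1/(q^2-q+1)$, to get $\sum_{M\in\mathcal{M}(H,x)}\mu(\theta,M\backslash H)\leq (q-\sqrt{3q}+2)/(q^2-q+1)<1$ before applying Lemma~\ref{l:spread}. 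Your treatment of the exceptional small cases is essentially right but lighter than needed in one respect and heavier in another: for $e\geq 2$ plain Zsigmondy primitive prime divisors of $3^{6e}-1$ always exist (the Feit exceptions for $x=3$ occur only at $n\in\{4,6\}$, i.e.\ $e=1$, which the paper checks by \texttt{magma}), so no largeness bookkeeping is required there --- but largeness alone was never going to deliver the uniqueness claim your argument rests on.
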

\begin{proof}
Recall that $|\mathrm{Aut}(S):S|=2e$. When $e=1$, we have checked the veracity of this lemma with the computer algebra system \texttt{magma}~\cite{magma}. Therefore for the rest of the argument we suppose $e\ge 2$.

Let $K\le H$ with $H=KS$ and let $\theta\in K\setminus S$.  Let $r$ be a primitive prime divisor of $q^6-1$. From the structure of the Lie  group $G_2(q)$, we deduce that the Sylow $r$-subgroups of $S$ are cyclic. Let $x\in S$ be an element generating a Sylow $r$-subgroup of $S$. Let $M\in \mathcal{M}(H,x)$.  Here we use the information in~\cite[Table~$8.42$]{bray}. From the list of the maximal subgroups of $H$ and recalling that $H$ does contain an outer automorphism which is not a field automorphism, we deduce that either $M=\nor H{\langle x\rangle}$, or $e$ is odd and $M\cap S\cong {}^2G_2(q)$ (here we are assuming $e\ge 2$).  In particular, when $e$ is even, we have 
$\mathcal{M}(H,x)=\{\nor H{\langle x\rangle}\}$. Therefore, we deduce $$\sum_{M\in \mathcal{M}(H,x)}\mu(\theta,M\backslash H)= \mu(\theta,\nor{H}{\langle x\rangle}\backslash H)<1,$$ and hence $t(H,K)=1$, from Lemma~\ref{l:spread}.

Suppose now that $e$ is odd and let $\bar{M}\in\mathcal{M}(H,x)\setminus\{\nor H{\langle x\rangle}\}$.  Then $\bar{M}\cap S\cong {}^2G_2(q)$. Observe that from the ``$c$'' column in~\cite[Table~$8.42$]{bray}, we deduce that the maximal subgroups of $H$  with $\bar{M}\cap S$ isomorphic to ${}^2G_2(q)$ form a unique conjugacy class. Observe that 
$$q^6-1=(q^3-1)(q+1)(q+\sqrt{3q}+1)(q-\sqrt{3q}+1).$$
In particular, the primitive prime divisor $r$ of $q^6-1$ can be chosen so that $r$ divides $q+\sqrt{3q}+1$.
Let $\Omega_1:=\{\langle x^g\rangle\mid g\in H\}$. Using the information in~\cite[Table~$8.42$]{bray}, we deduce
$$|\Omega_1|=\frac{q^6(q^6-1)(q^2-1)}{(q^2-q+1)6}=\frac{q^6(q^3-1)(q^2-1)(q+1)}{6}.$$

Let $\Omega_2:=\{\bar{M}^g\mid g\in H\}$. Using the information in~\cite[Table~$8.42$]{bray}, we deduce
$$|\Omega_2|=\frac{q^6(q^6-1)(q^2-1)}{(q^3+1)q^3(q-1)}=q^3(q^3-1)(q+1).$$
Now, consider the bipartite graph having vertex set $\Omega_1\cup\Omega_2$ and having edge set consisting of the pairs $\{A,B\}$ with $A\in \Omega_1$, $B\in \Omega_2$ and $A\le B$. Fix $B\in \Omega_2$. Using the structure of the Ree group $B$, we see that the number of $A\in \Omega_1$ with $A\le B$ is
$$\frac{(q^3+1)q^3(q-1)}{(q+\sqrt{3q}+1)6}=\frac{(q-\sqrt{3q}+1)q^3(q^2-1)}{6}.$$
In particular, the number of edges of the bipartite graph is
$$|\Omega_2|\frac{(q-\sqrt{3q}+1)q^3(q^2-1)}{6}
=\frac{q^6(q^3-1)(q^2-1)(q-\sqrt{3q}+1)(q+1)}{6}.$$
This shows that the number of elements in $\Omega_2$ containing the element $\bar{M}\in \Omega_1$ is
$$\frac{\frac{q^6(q^3-1)(q^2-1)(q-\sqrt{3q}+1)(q+1)}{6}}{|\Omega_1|}=q-\sqrt{3q}+1.$$
Thus
$$|\mathcal{M}(H,x)|=|\{\nor H{\langle x\rangle}\}\cup\{M\in \Omega_2\mid x\in M\}|=q-\sqrt{3q}+2.$$

From~\cite[Theorem~1]{LLS}, we have $\mu(\theta,M\backslash H)<1/(q^2-q+1)$ for every $M\in \mathcal{M}(\theta,M\setminus H)$. Therefore
$$\sum_{M\in \mathcal{M}(H,x)}\mu(\theta,M\backslash H)\le\frac{q-\sqrt{3q}+2}{q^2-q+1}<1.$$
 Now Lemma~\ref{l:spread} shows that $t(H)=1$.
\end{proof}

\begin{lemma}\label{technical6}Let $e$ be a positive integer with $e\ge 2$, let 
$q=2^{e}$ and let $H$ be an almost simple group with socle $S:=\mathrm{Sp}_4(q)$ and with $H$ containing an outer automorphism which is not a field automorphism. Then $t(H)=1$.
\end{lemma}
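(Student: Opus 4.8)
The plan is to follow the now-familiar template established in Lemmas~\ref{technical3}, \ref{technical4}, and \ref{technical5}, adapting it to the symplectic group $S=\mathrm{Sp}_4(q)$ with $q=2^e$. First I would fix $K\le H$ with $H=KS$ and an element $\theta\in K\setminus S$, and then select a suitable $pp$-element $x\in S$ of large order whose normalizer structure is rigid. The natural choice is a generator of a cyclic maximal torus of order $q^2+1$ (a Singer-type torus), so I would let $r$ be a primitive prime divisor of $q^4-1$; then $r$ divides $q^2+1$, the Sylow $r$-subgroup of $S$ is cyclic, and I would take $x$ to generate it. By Lemma~\ref{feit} (or the basic Zsigmondy theorem) such $r$ exists for all $q=2^e$ with $e\ge 2$ except possibly small cases, which I would clear with a direct \texttt{magma}~\cite{magma} computation as in the earlier lemmas.

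The heart of the argument is to enumerate $\mathcal{M}(H,x)$ using the maximal subgroup tables for $\mathrm{Sp}_4(q)$ in~\cite[Tables~$8.12$,~$8.13$,~$8.14$]{bray}. Because $r$ is a large primitive prime divisor of $q^4-1$ and $H$ is assumed to contain an outer automorphism that is not a field automorphism (hence involving the exceptional graph automorphism available when $p=2$), I expect that the maximal subgroups of $H$ containing $x$ are tightly constrained: the torus normalizer $\nor H{\langle x\rangle}$ is always present, and the only other possibility should be a subfield-type or a twisted subgroup whose order is divisible by $r$. I would argue, exactly as in Lemma~\ref{technical5}, that membership in an Aschbacher class $\mathcal{S}$ subgroup is impossible since $r$ is a large Zsigmondy prime. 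If the remaining candidates are ruled out entirely, then $\mathcal{M}(H,x)=\{\nor H{\langle x\rangle}\}$ and the single fixed-point-ratio bound $\mu(\theta,\nor H{\langle x\rangle}\backslash H)<1$ together with Lemma~\ref{l:spread} gives $t(H)=1$ immediately.

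If instead a second family of maximal subgroups genuinely contains $x$ (I anticipate this will occur, mirroring the $\mathrm{SU}_3(q_0)$ and ${}^2G_2(q)$ subcases of the previous two lemmas, with the graph automorphism forcing the Suzuki subgroups ${}^2B_2(q)$ into play), then I would carry out the same double-counting via a bipartite graph on $\Omega_1:=\{\langle x^g\rangle\mid g\in H\}$ and $\Omega_2:=\{\bar M^g\mid g\in H\}$. Using the orders read off from~\cite[Tables~$8.12$--$8.14$]{bray}, I would compute $|\Omega_1|$, $|\Omega_2|$, and the per-vertex incidence count, obtaining an exact value for $|\mathcal{M}(H,x)|$. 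Finally, invoking the fixed-point-ratio upper bounds for $\mathrm{Sp}_4(q)$ from the literature on base sizes (for the torus normalizer and for the subfield/twisted subgroups) I would verify that $\sum_{M\in\mathcal{M}(H,x)}\mu(\theta,M\backslash H)<1$ for all $q=2^e$ with $e\ge 2$, deferring finitely many small exceptions to \texttt{magma}, and conclude $t(H)=1$ by Lemma~\ref{l:spread}.

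The main obstacle will be the precise combinatorial enumeration of $\mathcal{M}(H,x)$ once the graph automorphism is present: the symplectic group in even characteristic has the extra graph automorphism swapping the two classes of parabolic and reductive subgroups, so I must take care to identify exactly which maximal subgroup families survive in $H$ (as opposed to in $S$ or in $\mathrm{P}\Gamma\mathrm{Sp}_4(q)$) and to pin down the number of $S$-conjugacy classes of the relevant twisted subgroups from the ``$c$'' column in~\cite{bray}. Getting these multiplicities and the fixed-point-ratio estimates to combine into a strict inequality below $1$ uniformly in $q$ is the delicate quantitative step.
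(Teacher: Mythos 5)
Your proposal is correct and follows essentially the same route as the paper: a generator $x$ of a cyclic Sylow $r$-subgroup for $r$ a primitive prime divisor of $q^4-1$ chosen to divide $q+\sqrt{2q}+1$, the determination from \cite[Table~$8.14$]{bray} that $\mathcal{M}(H,x)$ consists of $\nor H{\langle x\rangle}$ together with (only when $e$ is odd) maximal subgroups with $M\cap S\cong {}^2B_2(q)$, the bipartite double count yielding $|\mathcal{M}(H,x)|=q-\sqrt{2q}+2$, and a fixed-point-ratio estimate (the paper uses $\mu(\theta,M\backslash H)\le |\theta^H|^{-1/4}$ from \cite[Theorem~$1$]{burness}, after replacing $\theta$ by a power so that it is a graph-field involution with $\cent S\theta\cong {}^2B_2(q)$) fed into Lemma~\ref{l:spread}. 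The only cosmetic deviation is your appeal to Lemma~\ref{feit} and a \texttt{magma} check for small cases: since here $q^4-1=2^{4e}-1$ with $4e\ge 8$, an ordinary Zsigmondy prime exists without exception, so the paper needs no computational verification in this lemma.
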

\begin{proof}
Recall that $|\mathrm{Aut}(S):S|=2e$. Let $K\le H$ with $H=KS$ and let $\theta\in K\setminus S$.  Let $r$ be a primitive prime divisor of $q^4-1$. From the structure of the classical  group $\mathrm{Sp}_4(q)$, we deduce that the Sylow $r$-subgroups of $S$ are cyclic. Let $x\in S$ be an element generating a Sylow $r$-subgroup of $S$. 

Let $M\in\mathcal{M}(H,x)$. Here we use the information in~\cite[Table~$8.14$]{bray}. From the list of the maximal subgroups of $H$ and recalling that $H$ does contain an outer automorphism which is not a field automorphism, we deduce that either $M=\nor H{\langle x\rangle}$, or $e$ is odd and $M\cap S\cong {}^2B_2(q)$.  In particular, when $e$ is even, we have 
$\mathcal{M}(H,x)=\{\nor H{\langle x\rangle}\}$. Therefore, we deduce $$\sum_{M\in \mathcal{M}(H,x)}\mu(\theta,M\backslash H)= \mu(\theta,\nor{H}{\langle x\rangle}\backslash H)<1,$$ and hence $t(H,K)=1$, from Lemma~\ref{l:spread}.

Suppose now that $e$ is odd and let $\bar{M}\in\mathcal{M}(H,x)\setminus\{\nor H{\langle x\rangle}\}$. Then $\bar{M}\cap S\cong {}^2B_2(q)$. Observe that from the ``$c$'' column in~\cite[Table~$8.14$]{bray}, we deduce that the maximal subgroups of $H$  with $\bar{M}\cap S$ isomorphic to ${}^2B_2(q)$ form a unique conjugacy class. Observe that 
$$q^4-1=(q^2-1)(q+\sqrt{2q}+1)(q-\sqrt{2q}+1).$$
In particular, the primitive prime divisor $r$ of $q^4-1$ can be chosen so that $r$ divides $q+\sqrt{2q}+1$.
Let $\Omega_1:=\{\langle x^g\rangle\mid g\in H\}$. Using the information in~\cite[Table~$8.14$]{bray}, we deduce
$$|\Omega_1|=\frac{q^4(q^4-1)(q^2-1)}{(q^2+1)4}=\frac{q^4(q^2-1)^2}{4}.$$
Let $\Omega_2:=\{\bar{M}^g\mid g\in H\}$. Using the information in~\cite[Table~$8.14$]{bray}, we deduce
$$|\Omega_2|=\frac{q^4(q^4-1)(q^2-1)}{(q^2+1)q^2(q-1)}=q^2(q^2-1)(q+1).$$
How, consider the bipartite graph having vertex set $\Omega_1\cup\Omega_2$ and having edge set consisting of the pairs $\{A,B\}$ with $A\in \Omega_1$, $B\in \Omega_2$ and $A\le B$. Fix $B\in \Omega_2$. Using the structure of the Suzuki group $B$, we see that the number of $A\in \Omega_1$ with $A\le B$ is
$$\frac{(q^2+1)q^2(q-1)}{(q+\sqrt{2q}+1)4}=\frac{(q-\sqrt{2q}+1)q^2(q-1)}{4}.$$ In particular, the number of edges of the bipartite graph is
$$|\Omega_2|\frac{(q-\sqrt{2q}+1)q^2(q-1)}{4}=\frac{q^4(q^2-1)^2(q-\sqrt{2q}+1)}{4}.$$
This shows that the number of elements in $\Omega_2$ containing the element $\bar{M}\in \Omega_1$ is
$$\frac{\frac{q^4(q^2-1)^2(q-\sqrt{2q}+1)}{4}}{|\Omega_1|}=q-\sqrt{2q}+1.$$
Thus
$$|\mathcal{M}(H,x)|=|\{\nor H{\langle x\rangle}\}\cup \{M\in \Omega_2\mid x\in M\}|=q-\sqrt{2q}+2.$$

Now,~\cite[Theorem~1]{burness} yields $\mu(\theta,M\backslash H)\le |\theta^H|^{-\frac{1}{4}}=|H:\cent H\theta|^{-\frac{1}{4}}$ for every  $M\in \mathcal{M}(H,x)$. As $\theta$ is an outer automorphism which is not a field automorphism and as $e$ is odd, replacing $\theta$ with a suitable power, we may suppose that $\theta$ is an involution and that $\theta$ is a graph-field automorphism. From~\cite{GLS}, we deduce that $\cent S\theta\cong {}^2B_2(q)$ and hence $$|\theta^{H}|=\frac{q^4(q^4-1)(q^2-1)}{(q^2+1)q^2(q-1)}=q^2(q^2+1)(q+1).$$ Therefore
$$\sum_{M\in \mathcal{M}(M,x)}\mu(\theta,M\backslash H)\le\frac{q-\sqrt{2q}+2}{(q^2(q^2+1)(q+1))^{1/4}}<1,$$
where the last inequality follows with a computation.
 Now Lemma~\ref{l:spread} shows that $t(H)=1$.
\end{proof}

\begin{lemma}\label{technical}Let $H$ be an almost  simple group with socle $S$. Then there exists a subgroup $K$ of $H$ with $H=KS$ and with $m_K(H)>t(H)$. 
\end{lemma}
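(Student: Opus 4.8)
The plan is to leverage the uniform bound $t(H)\le 2$ from~\eqref{eq:silly2} and reduce the statement to a single construction. If I can find a subgroup $K$ of $H$ with $H=KS$ admitting a $K$-independent generating set of cardinality $3$, then $m_K(H)\ge 3>2\ge t(H)$ and the lemma follows; more generally it suffices to produce, for some such $K$, a $K$-independent generating set of cardinality $t(H)+1$. Throughout I would use the following reformulation, immediate from the definitions: a set $\{y_1,\dots,y_k\}$ is a $K$-independent generating set for $H$ if and only if there are maximal subgroups $M_1,\dots,M_k$ of $H$, each containing $K$, with $y_i\in M_j$ precisely when $i\ne j$, and with $\langle K,y_1,\dots,y_k\rangle=H$. (One direction is clear; conversely, any proper subset of $\{y_1,\dots,y_k\}$ lies in some $M_i$ and hence fails to generate $H$ with $K$.) Thus the task becomes the construction of $K$ together with $t(H)+1$ maximal subgroups of $H$ in \emph{general position} over $K$, from which the elements $y_i$ are then read off.

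First I would treat the case $H=S$. Here $H=KS$ forces $K\le S$, and taking $K=1$ gives $m_1(S)=m(S)$, while $t(S)=2$ (the value $2$ is attained already for $K=1$, since $S$ is not cyclic, and $t(S)\le 2$ by~\eqref{eq:silly2}). Hence it is enough to exhibit an independent generating set of $S$ of size $3$, equivalently three maximal subgroups $M_1,M_2,M_3$ of $S$ with $\bigcap_{j\ne i}M_j\not\le M_i$ for each $i$. For $S$ of sufficiently large order this is routine, and the finitely many small socles are dealt with by direct inspection; for instance $\mathrm{Alt}(5)$ admits the independent generating set $\{(123),(12)(34),(12)(35)\}$, whose three $2$-element subsets generate, respectively, $\mathrm{Stab}(5)\cong\mathrm{Alt}(4)$, $\mathrm{Stab}(4)\cong\mathrm{Alt}(4)$, and a subgroup of order $6$, while the three together generate $\mathrm{Alt}(5)$.

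Next I would treat $H\ne S$. Since $\mathrm{Out}(S)$ is solvable, $H/S$ is generated economically, and I would choose $K$ to project onto $H/S$ with $K\cap S$ as small as possible (for $H/S$ cyclic, a single element $\theta$ whose image generates $H/S$). The generic strategy is again to produce three maximal subgroups of $H$ containing $K$ in general position, which gives $m_K(H)\ge 3>t(H)$. When the lattice of maximal subgroups above $K$ is too sparse for three such subgroups, it suffices to produce only two, provided $t(H)=1$: one then needs distinct maximal subgroups $M_1,M_2$ of $H$ containing $K$, with $y_1\in M_2\setminus M_1$, $y_2\in M_1\setminus M_2$, and $\langle K,y_1,y_2\rangle=H$. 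Here the containments are automatic once $K$ lies in two distinct maximal subgroups, but the crucial \emph{joint generation} $\langle K,y_1,y_2\rangle=H$ must be arranged by choosing $y_1,y_2$ so as not to fall into a common further maximal subgroup; this is exactly where the hypothesis $t(H)=1$ is used, which for the delicate socles (the small-rank classical and exceptional groups with graph or graph-field automorphisms) is supplied by Lemmas~\ref{technical1}--\ref{technical6} via the spread criterion of Lemma~\ref{l:spread}.

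The main obstacle is the \emph{uniform} existence of these general-position configurations across all almost simple groups. This rests on the classification of finite simple groups and on precise information about the lattices of maximal subgroups of $S$ and of $H$ — exactly the data marshalled in the proofs of Lemmas~\ref{technical1}--\ref{technical6}. Two points demand care. First, one must guarantee not merely that the chosen elements lie in the prescribed maximal subgroups but that they jointly generate $H$ together with $K$; this is the genuinely delicate verification, and it is where the fixed-point-ratio estimates behind Lemma~\ref{l:spread} rule out the elements lying in a common extra maximal subgroup. Second, the finitely many groups of small order (small $\mathrm{PSL}_2(q)$, sporadic groups, and the exceptional cases listed in Lemma~\ref{feit}) escape the generic counting and must be confirmed by direct computation, for instance with \texttt{magma}~\cite{magma}.
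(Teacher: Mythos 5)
Your skeleton coincides with the paper's architecture: bound $t(H)\le 2$ via~\eqref{eq:silly2}, exhibit a $K$-independent generating set of size $3$, and fall back to size $2$ in the families where Lemmas~\ref{technical1}--\ref{technical6} give $t(H)=1$; your maximal-subgroup reformulation of $K$-independence is also correct. The genuine gap is that the entire content of the lemma --- the actual construction of $K$ and of the independent sets --- is deferred. You flag ``the uniform existence of these general-position configurations'' as the main obstacle and claim it rests on ``exactly the data marshalled in the proofs of Lemmas~\ref{technical1}--\ref{technical6}'', but those lemmas contain no such data: they only prove $t(H)=1$ for the specific small-rank socles ($\mathrm{PSL}_2(q)$, $\mathrm{PSU}_3(q)$, $\mathrm{PSL}_3(q)$ with graph automorphisms, ${}^2B_2(q)$, ${}^2G_2(q)$, $\mathrm{Sp}_4(2^e)$, $G_2(3^e)$) and say nothing about generating configurations in high-rank classical groups, in $\Sym(n)$, or in $\aut S$ for $S$ sporadic. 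What fills this hole in the paper is a concrete choice: for $S$ of Lie type, take $K$ to be a supplement consisting of inner-diagonal, field and graph automorphisms preserving the root system, let $\tilde\Pi$ be a set of $K$-orbit representatives on a fundamental system $\Pi$, and observe that since each $S_\beta=\langle x_{\hat\alpha}(t)\mid \alpha\in\pm\Pi\setminus\{\beta\},\,t\in\mathbb{F}\rangle$ lies in a proper parabolic subgroup normalized by $K$, any $K$-independent generating set extracted from the root elements has size at least $2|\tilde\Pi|$; this gives $m_K(H)\ge 4$ when $|\tilde\Pi|\ge 2$, and $m_K(H)\ge 2$ in the exceptional low-rank families, where $t(H)=1$ closes the argument. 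For $S=\mathrm{Alt}(n)$ the paper writes down an explicit $\langle(1,2)\rangle$-independent set of size $n-2$ in $\Sym(n)$, and the sporadic cases are settled by \texttt{magma} and \cite[Table 9]{bgk}. Without a construction of this kind your proposal asserts the conclusion rather than proving it.

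Two further points. Your size-$2$ fallback is itself incomplete: $t(H)=1$ says that a single $pp$-element of $S$ generates $H$ together with $K$; it does not produce two distinct maximal subgroups $M_1\ne M_2$ above your chosen $K$ together with $y_1\in M_2\setminus M_1$ and $y_2\in M_1\setminus M_2$ satisfying $\langle K,y_1,y_2\rangle=H$. The fixed-point-ratio machinery of Lemma~\ref{l:spread} is tailored to showing that some conjugate of a fixed element generates with each $g\in H\setminus S$ --- that is, to proving $t(H)=1$ --- not to arranging joint generation in general position, and the paper never argues this way (the parabolic count above already yields $m_K(H)\ge 2$ in those cases). Finally, in the case $H=S$ the claim $m(S)\ge 3$ should not be waved through as ``routine for sufficiently large order'': the paper's uniform one-line reason is that $S$ is generated by conjugate involutions, while one or two involutions generate only a cyclic or dihedral (hence solvable) group, so any independent generating set extracted from such a generating set has size at least $3$.
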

\begin{proof}
Suppose first $H=S$. Choose $K:=1$. Then $m_K(H)=m(H)\ge 3$, because we can generate $H=S$ with conjugated involutions. Therefore, the proof follows from~\eqref{eq:silly2}. Thus, for the rest of the argument, we suppose $H\ne S$.
Now, we use the Classification of Finite Simple Groups and we divide our proof depending on the type of $S$.

\smallskip

\noindent\textsc{Alternating groups:} Suppose $S$ is an alternating group $\mathrm{Alt}(n)$ of degree $n\ge 5$. Assume first $n\ne 6$, or $n=6$ and $H=\Sym(6)$. Then $H=\Sym(n)$. Choose $K:=\langle (1,2)\rangle$ and let
$$\Lambda:=\{(1,2,3),  (1,2)(3,4), (1,2)(3,5), \dots, (1,2)(3,n)\}.$$
It is readily seen that $\Lambda$ is a  $K$-independent generating set for $H$. Therefore, $m_K(H)\ge |\Lambda|=n-2\geq 3$ and the proof follows again from~\eqref{eq:silly2}.

As $\mathrm{Alt}(6)\cong \mathrm{PSL}_2(9)$, we postpone the proof of the case $n=6$ and $H\ne \mathrm{Sym}(6)$, when we deal with groups of Lie type.

\smallskip

\noindent\textsc{Sporadic groups:} Suppose $S$ is a sporadic simple group. As $H\ne S$, we deduce $H=\aut S$ and $S$ is one of the following groups
$$Fi_{22}, Fi_{24}, HN, J_3, M_{22}, O'N, HS, J_2, McL, He, M_{12}, Suz.$$

If $S\in \{Fi_{22}, Fi_{24}, HN, J_3, M_{22}, O'N\}$, then it follows from~\cite[Table 9]{bgk} that $t(H)=1$.  However, if we choose $\alpha$ an involution from $H\setminus S$ and we set $K:=\langle \alpha\rangle$, then $m_K(H)\geq 2,$ because we can generated $H$ with $\alpha$ and a suitable number (at least 2) of involutions from $S.$  

If $S \in \{HS, J_2, McL, He, M_{12}, Suz\}$, we have verified that $m_K(H)\geq 3$ using \texttt{magma}: in all cases there exists $\alpha \in H\setminus S$ with $|\alpha|=2$ and three conjugated involutions in $S$ such that
$\{\alpha, t_1, t_2, t_3\}$ is a $\langle\alpha\rangle$-independent generating set for $H.$

\smallskip

\noindent\textsc{Groups of Lie type:} Here we use the information and the notation in~\cite[Section~2.4]{GLS}. The simple group of Lie type $S$ is generated by root elements $x_{\pm\hat{\alpha}}(t)$, where $\alpha\in \Pi$, $\Pi$ is a fundamental system for the root system $\Sigma$ of $S$, and $t$ lies in a suitable finite field $\mathbb{F}$. As $x_{\hat{\alpha}}(t)$ is unipotent, $x_{\hat{\alpha}}(t)$ has prime order and hence it is a $pp$-element. 

The action of the automorphism group of $S$ on the root elements $x_{\pm\hat{\alpha}}(t)$ is described in~\cite[Section~2.5]{GLS} and again we use the information and the notation therein. The outer automorphisms of $S$ are divided in inner-diagonal, field and graph automorphisms. These can be chosen so that inner-diagonal and field automorphisms normalize each root subgroup $\langle x_{\hat{\alpha}}(t)\mid t\in\mathbb{F}\rangle$; whereas, graph automorphisms permute the root subgroups according to the action of the graph automorphism on the nodes of the Dynkin diagram. In particular, we may choose a supplement $K$ of $S$ in $H$ so that the elements in $K$ consist of inner-diagonal, field and graph automorphisms, with respect to the choice of the root system $\Sigma$. Now, let $\tilde{\Pi}\subseteq \Pi$ be a set of representatives of the orbits for the action of $K$ on $\Pi$. Then
$$H=\langle K,x_{\hat{\alpha}}(t)\mid \alpha\in {\pm \tilde\Pi},t\in \mathbb{F}\rangle$$
and hence from the set $\{x_{\hat{\alpha}}(t)\mid \alpha\in \pm\tilde{\Pi},t\in \mathbb{F}\}$ we may extract a $K$-independent generating set $Y$ for $H$ consisting of $pp$-elements. For each $\beta \in \pm \Pi$, define $S_\beta:=\langle x_{\hat{\alpha}}(t)\mid \alpha\in\pm \Pi\setminus\{\beta\}, t\in \mathbb{F} \rangle.$ Observe that $S_\beta$ is contained  in a proper parabolic subgroup of $S$ normalized by $K$. This implies $|Y|\ge 2|\tilde{\Pi}|$. A direct inspection on the various root systems gives that one of the following holds:
\begin{enumerate}
\item\label{ITEM1} $|\tilde{\Pi}|\ge 2 $, or
\item\label{ITEM2} $S$ is a simple group of Lie type of Lie rank $1$, that is, $S\in \{A_1(q)=\mathrm{PSL}_2(q),\,{}^2A_2(q)=\mathrm{PSU}_3(q),\,{}^2B_2(q),\,{}^2G_2(q)\},$ or
\item\label{ITEM3} $S=A_2(q)=\mathrm{PSL}_3(q)$ and $H\nleq \mathrm{P}\Gamma\mathrm{L}_3(q)$, 
\item\label{ITEM4} $S=B_2(q)=\mathrm{PSp}_4(q)$, $q=2^e$ for some $e\ge 1$ and $H$ contains an outer automorphism which is not a field automorphism,
\item\label{ITEM5} $S=G_2(q)$, $q=3^e$ for some $e\ge 1$, and $H$ contains an outer automorphism which is not a field automorphism.
\end{enumerate} 

If~\eqref{ITEM1} holds, then the proof follows from~\eqref{eq:silly2}. In the remaining cases, we have shown in Lemmas~\ref{technical1},~\ref{technical2U},~\ref{technical2},~\ref{technical3},~\ref{technical4},~\ref{technical5} and~\ref{technical6} that $t(H)=1$. Using this slight refinement on the value of $t(H)$ and repeating the argument above for the remaining groups we deduce $m_K(H)\ge 2>1=t(H)$.
\end{proof}

\subsection{Pulling the threads of the argument}
\begin{proof}[Proof of Theorem~$\ref{thrm:main}$] We argue by contradiction and among all non-soluble $\mathcal B_{pp}$-groups we choose $G$ having minimal order. 

Let $N$ be a minimal normal subgroup of $G$. From Lemma~\ref{induction}, $G/N$ is a  $\mathcal{B}_{pp}$-group and hence, from our minimal choice of $G$, we deduce that 
\begin{equation}\label{eq:silly1}
G/N \hbox{ is solvable}.
\end{equation}

Suppose that $G$ has two distinct minimal normal subgroups $N_1$ and $N_2$. Since $N_1\cap N_2=1$, $G$ embeds into the cartesian product $G/N_1\times G/N_2$. As $G/N_1$ and $G/N_2$ are both solvable, we deduce that $G$ is solvable, which is a contradiction. Therefore, $G$ has a unique minimal normal subgroup $N$, that is, $G$ is monolithic. 

If $N$ is abelian, then $G$ is solvable by~\eqref{eq:silly1}, which is a contradiction. Therefore, $N$ is non-abelian and hence $N\cong S^n$, for some non-abelian simple group $S$. Write $N:=S_1\times \cdots \times S_n$, where $S_1,\ldots,S_n$ are the simple direct factors of $N$. Let $H$ be the subgroup of $\aut(S)$ induced by the conjugacy action of $\nor G{S_1}$ on $S$. Clearly, $H$ is an almost simple group with socle $S$. Moreover, since $G$ is monolithic, $G$ embeds into the wreath product $H \wr \perm(n)$ and hence, without loss of generality, we may assume that $G$ is a subgroup of $H\,\mathrm{wr} \Sym(n)$ with $S^n\le G$ and with $$\pi:\nor G{S_1}\to H$$ projecting onto $H$. In particular, we may write the elements of $G$ as ordered pairs $f\sigma$, with $f\in H^n$ and $\sigma\in \Sym(n)$.

Let $$m_1=m(G/N).$$ 
Let $$Y=\{g_1,\dots,g_{m_1}\}$$ be a set of $pp$-elements of $G$ with $\{g_1N,\ldots,g_{m_1}N\}$ a $pp$-base for $G/N$. 

Let
$$K:=\pi(\nor{\langle Y\rangle}{S_1}).$$
As $G=\langle Y\rangle N$, from the modular law  we get 
$$\nor G{S_1}=\nor G{S_1}\cap G=(\nor G{S_1}\cap \langle Y\rangle)N=\nor{\langle Y\rangle} {S_1}N.$$ 
Thus $$H=\pi(\nor {G}{S_1})=\pi(\nor {\langle Y\rangle}{S_1})\pi(N)=KS.$$

Let $X$  be a set of $pp$-elements in $S$ with $H=\langle X, K\rangle$ and having cardinality $t(H,K).$ Let
$$\tilde X:=\{(x,\underbrace{1,\dots,1}_{n-1 \hbox{ times}})\in N \mid x \in X\}$$
and observe that $\tilde X\subseteq S^n=N\le G\le H\,\mathrm{wr}\Sym(n).$

As $N$ is a minimal normal subgroup of $G$, $G$ acts transitively by conjugation on the set $\{S_1,\ldots,S_n\}$ of simple direct factors of $N$. From this, it follows that $Y \cup \tilde X$ is a generating set for $G$. As $Y\cup \tilde{X}$ consists  of $pp$-elements and as all $pp$-bases of $G$ have the same cardinality, we get $m_{pp}(G)\le m_1+t(H,K)\le m_1+t(H)$. Thus 
\begin{equation}\label{eq:22}
m(G)\le m_1+t(H),
\end{equation} by Lemma~\ref{l:1}.

Recall the definition of $\mu(G)$ and $\mu(S)$ in Section~\ref{notation}. In~\cite[page~403, inequality~(1)]{l2} and in~\cite[Proposition~4]{l1}, it is proved that $\mu(G)\geq \mu(H)$. Moreover, by~\cite[Lemma~7]{l1}, we have $\mu(H)\geq m_K(H)$, for every subgroup $K$ of $H$ with $H=KS$. In particular, combining these two results, we deduce $\mu(G)\ge m_K(H)$. From~\eqref{eq:22}, we get
$$t(H)\ge m(G)-m_1=m(G)-m(G/N)=\mu(G)\ge m_K(H),$$
for every subgroup $K$ of $H$ with $H=KS$. However, this contradicts Lemma~\ref{technical}.
\end{proof}

\begin{proof}[Proof of Corollary~$\ref{corollary}$]Let $G$ be a $\mathcal{B}_{pp}$-group with $\Phi(G)=1$. From Theorem~\ref{thrm:main}, $G$ is solvable and hence the proof now follows from~\cite[Theorem~$1.2$]{s}.
\end{proof}

\section{Proof of Theorem~$\ref{bound}$}\label{sec:3}
Let $G$ be a finite group. Take a chief series
$$1=G_t \unlhd \dots \unlhd G_0=G$$ and consider the non-negative integers $\mu_i=m(G/G_{i+1})-m(G/G_i).$
Clearly \begin{equation}\label{sommamu} m(G)=\sum_{0\leq i \leq t-1}\mu_i.
\end{equation}
Information on the values of $\mu_i$ have been obtained in \cite {l1}, where is it proved in particular:
\begin{itemize}
	\item if $G_i/G_{i+1}$ is abelian, then $\mu_i=0$ if $G_{i+1}/G_{i}\leq \Phi(G/G_{i+1})$, $\mu_i=1$ otherwise;
		\item if $G_i/G_{i+1}$ is non-abelian, then $\mu_i=\mu_i(L_i)=m(L_i)-m(L_i/\soc L_i)$, where $L_i=G/C_G(G_i/G_{i+1}).$
\end{itemize}
In the second case, $L_i$ is a monolithic group and $\soc L_i=S_i^{n_i}$ where $n_i$ is a positive integer and $S_i$ is a finite non-abelian simple group.  As we already recalled in the previous section, by ~\cite[page~403, inequality~(1)]{l2} and ~\cite[Proposition~4]{l1}, there exists an almost simple group $H_i$ such that $\soc H_i=S_i$ and  $\mu_i=\mu(L_i)\geq \mu(H_i)$. Moreover, by~\cite[Lemma~7]{l1}, we have $\mu(H_i)\geq m_{K_i}(H_i)$, for every subgroup $K_i$ of $H_i$ with $H_i=K_iS_i$. By the results in Section \ref{proofs}, for every choice of $H_i$ there exists $K_i$ such that $K_iS_i=H_i$ and $m_{K_i}(H_i)\geq 2.$  So $\mu_i\geq 2$ whenever $G_i/G_{i+1}$ is non-abelian, and therefore the statement of Theorem \ref{bound} follows 
from~\eqref{sommamu}.

\end{document}